\newtheorem{thm}{Theorem}[section]
\newtheorem{lem}[thm]{Lemma}
\newtheorem{prop}[thm]{Proposition}
\newtheorem{conj}[thm]{Conjecture}
\theoremstyle{definition}
\newtheorem{defn}[thm]{Definition}
\theoremstyle{remark}
\newtheorem{rem}[thm]{Remark}
\numberwithin{equation}{section} \numberwithin{table}{section}
\newcommand{\zell}{{\Z/{\ell\Z}}}
\newcommand{\muell}{{\mu_{\ell}}}
\newcommand{\GQ}{{\mathrm{Gal}(\overline{\mathbb{Q}}/{\mathbb{Q}})}}
\newcommand{\arsub}{\ar@{}[r]|-*[@]{\subset}}
\newcommand{\arsup}{\ar@{}[r]|-*[@]{\supset}}
\newcommand{\arcap}{\ar@{}[d]|-*[@]{\subset}}
\newcommand{\arcup}{\ar@{}[u]|-*[@]{\subset}}
\renewcommand{\pmod}[1]{{~(\mathrm{mod}~{#1})}}
\newcommand{\F}{{\mathbb{F}}}
\newcommand{\Q}{{\mathbb{Q}}}
\newcommand{\Z}{{\mathbb{Z}}}
\newcommand{\R}{{\mathbb{R}}}
\newcommand{\C}{{\mathbb{C}}}
\newcommand{\T}{{\mathbb{T}}}
\newcommand{\bH}{{\mathbb{H}}}
\newcommand{\bP}{{\mathbb{P}}}
\newcommand{\m}{{\mathfrak{m}}}
\newcommand{\cC}{{\mathcal{C}}}
\newcommand{\cD}{{\mathcal{D}}}
\newcommand{\cO}{{\mathcal{O}}}
\newcommand{\cX}{{\mathcal{X}}}
\newcommand{\Hom}{{\mathrm{Hom}}}
\newcommand{\Gal}{{\mathrm{Gal}}}
\newcommand{\Pic}{{\mathrm{Pic}}}
\newcommand{\End}{{\mathrm{End}}}
\newcommand{\Frob}{{\mathrm{Frob}}}
\newcommand{\Ver}{{\mathrm{Ver}}}
\newcommand{\old}{{\mathrm{old}}}
\newcommand{\new}{{\mathrm{new}}}
\newcommand{\Ta}{{\mathrm{Ta}}}
\newcommand{\tor}{{\mathrm{tor}}}
\newcommand{\upto}{{up to products of powers of 2 and 3}}
\newcommand{\modl}{{\pmod {\ell}}} 
\mathchardef\hyp="2D
\newcommand{\mat}[4]{
 \left(  \begin{smallmatrix} #1 & #2 \\ #3 & #4 \end{smallmatrix} \right)}
\newcommand{\br}[1]{\langle #1 \rangle}
\newcommand{\plim}[1]{\lim_{\substack{\longleftarrow\\{#1}}}\;}
\newcommand{\zmod}[1]{{\Z/{#1}\Z}}
\newcommand{\exclude}[1]{}
\begin{document}                                                                          

\title{Rational torsion points on Jacobians of Shimura curves}
\author{Hwajong Yoo}
\address{Center for Geometry and Physics, Institute for Basic Science (IBS), Pohang, Republic of Korea 37673}
\email{hwajong@gmail.com}
\thanks{This work was supported by IBS-R003-G1.}

\subjclass[2010]{11G10, 11G18, 14G05}
\keywords{Rational points, Shimura curves, Eisenstein ideals}

\begin{abstract}
Let $p$ and $q$ be distinct primes. There is the Shimura curve $\cX^{pq}$ associated to the indefinite quaternion algebra of discriminant $pq$ over $\Q$. Let $J^{pq}$ be the Jacobian variety of $\cX^{pq}$, which is an abelian variety over $\Q$. For an odd prime $\ell$, we provide sufficient conditions for the non-existence of rational points of order $\ell$ on $J^{pq}$. As an application, we find some non-trivial subgroups of the kernel of an isogeny from the new quotient $J_0(pq)^{\new}$ of $J_0(pq)$ to $J^{pq}$.
\end{abstract} 

\maketitle
\setcounter{tocdepth}{1}
\tableofcontents

\section{Introduction}
Let $p$ and $q$ be distinct primes. Consider the modular curves $X_0(p)$ and $X_0(pq)$ over $\Q$; and their Jacobians $J_0(p)$ and $J_0(pq)$ over $\Q$. By Mordell-Weil theorem, the rational points on $J_0(p)$ and $J_0(pq)$ are finitely generated abelian groups, and hence
$$
J_0(p)(\Q) \simeq \Z^a \oplus J_0(p)(\Q)_{\tor}\quad\text{and}\quad J_0(pq)(\Q) \simeq \Z^b \oplus J_0(pq)(\Q)_{\tor},
$$
where $J_0(p)(\Q)_{\tor}$ and $J_0(pq)(\Q)_{\tor}$ are finite abelian groups. 

In early 1970s Ogg \cite{Og75} conjectured that the group $J_0(p)(\Q)_{\tor}$ is generated by the cuspidal divisor $[(0)-(\infty)]$. In his landmark paper \cite{M77}, Mazur proved Ogg's conjecture. To do this, he studied submodules of $J_0(p)$ annihilated by the Eisenstein ideal of the Hecke ring of level $p$. A natural generalization is as follows.

\begin{conj}[Generalized Ogg's conjecture]
All rational torsion points on $J_0(pq)$ are cuspidal, i.e.,
$$
J_0(pq)(\Q)_{\tor} = \cC(pq),
$$
where $\cC(pq)$ is the cuspidal group of $J_0(pq)$.
\end{conj}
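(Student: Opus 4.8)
The plan is to adapt Mazur's method from \cite{M77} to the level $pq$ setting. One inclusion is immediate: every element of the cuspidal group $\cC(pq)$ is a rational torsion point, so $\cC(pq) \subseteq J_0(pq)(\Q)_{\tor}$, and the content is the reverse inclusion. I would establish it one prime at a time: writing $J_0(pq)(\Q)_{\tor}$ as the direct sum of its $\ell$-primary parts, it suffices to show each part lies in $\cC(pq)$. Since $J_0(pq)$ has good reduction outside $pq$, standard specialization arguments let me reduce, for $\ell \notin \{2,3,p,q\}$, to controlling the $\ell$-torsion $J_0(pq)(\Q)[\ell]$ and then upgrading to the full $\ell$-primary part; the primes $\ell \in \{2,3\}$ and $\ell \mid pq$ are genuinely delicate and I expect to treat them by hand, which is why the cleanest unconditional statement is likely to hold only \upto.

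First I would show that any rational point $T$ of order $\ell$ is \emph{Eisenstein}. The subgroup $\langle T \rangle \simeq \Z/\ell\Z$ is stable under $\Gal(\Qbar/\Q)$, and for every prime $r \nmid pq\ell$ the Eichler--Shimura congruence gives, on $J_0(pq)[\ell]$, the relation
$$
\Frob_r^2 - T_r\,\Frob_r + r = 0.
$$
Because $T$ is rational, $\Frob_r$ fixes $T$, whence $T_r T = (1+r)T$. Thus $T$ is annihilated by the Eisenstein ideal $\cI = (T_r - (1+r) : r \nmid pq\ell)$ together with suitable relations for the operators $U_p, U_q$ at the bad primes, so $T$ lies in the Eisenstein kernel $J_0(pq)[\m]$ for some maximal ideal $\m$ of the Hecke ring containing $\cI$.

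The core of the argument is then to compute this Eisenstein kernel and match it with the cuspidal group. Here the level $pq$ is substantially richer than Mazur's prime level: there are several Eisenstein maximal ideals, distinguished by the eigenvalues of $U_p$ and $U_q$ (each either $1$ or $r$ at the relevant prime), reflecting the old parts coming from $J_0(p)$ and $J_0(q)$ together with the genuinely new quotient. For each such $\m$ I would determine the $\m$-torsion using the arithmetic of the four rational cusps of $X_0(pq)$, which generate $\cC(pq)$, and compare the order of the resulting Eisenstein torsion with the order of $\cC(pq)$. To bound the torsion from above I would run Mazur's analytic input, constructing a winding-type element in the Hecke ring whose image pins down the index of the Eisenstein ideal, and combine it with a multiplicity-one statement for $J_0(pq)[\m]$.

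The main obstacle is precisely this last comparison at the bad and small primes. Multiplicity one for Eisenstein maximal ideals can fail at level $pq$, since $J_0(pq)[\m]$ need not be one-dimensional over the residue field when $U_p$ or $U_q$ degenerates, so the clean ``Eisenstein kernel $=$ cuspidal group'' dictionary of \cite{M77} requires a careful analysis of $J_0(pq)[\m]$ as a Galois module, including its ramification at $p$ and $q$. Controlling the $\ell \in \{2,3\}$ and $\ell \mid pq$ contributions — where extra congruences appear and where the representation on $J_0(pq)[\m]$ may be reducible but non-split in subtle ways — is where I expect the real difficulty to lie, and is the reason one should anticipate the equality holding in full generality only \upto.
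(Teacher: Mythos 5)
The statement you are addressing is a \emph{conjecture} in this paper, not a theorem: the author explicitly writes that ``our present knowledge is insufficient to prove the above conjecture completely,'' and the paper's actual results concern the Jacobian $J^{pq}$ of a Shimura curve, which is only isogenous to the new quotient of $J_0(pq)$. So there is no proof in the paper to compare yours against; your proposal has to be judged as an attempted proof of an open problem, and as such it contains genuine gaps.

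Your first step (any rational point of order $\ell$ is Eisenstein, via Eichler--Shimura and rationality of the point) is standard and correct. But everything after that is deferred rather than carried out, and the deferred steps are exactly where the known obstructions sit. Computing $J_0(pq)[\m]$ for each of the Eisenstein maximal ideals $\m$ at level $pq$, proving the relevant multiplicity statement, and matching orders against $\cC(pq)$ is not a routine adaptation of Mazur: multiplicity one genuinely fails in general at non-prime level (this is why the multiplicity-one results in this paper for $J^{pq}$ carry hypotheses on $p$ and $q$ modulo $\ell$, and why the dimension of $J_0(pq)[\m]$ can be $3$ rather than $2$, as cited in Step 2 of the proof of Theorem 4.1), the index of the Eisenstein ideal at level $pq$ is not pinned down by a single winding-element computation, and the cases $\ell\in\{2,3\}$ and $\ell \mid pq$ remain open. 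Moreover, your own stated conclusion is equality only up to products of powers of $2$ and $3$, which is strictly weaker than the conjectured equality $J_0(pq)(\Q)_{\tor} = \cC(pq)$. What you have written is an accurate description of the strategy behind the partial results the paper cites (Mazur \cite{M77}, Ohta \cite{Oh14}, and \cite{Yoo5}), but it is a research program, not a proof; no amount of filling in routine details would close it, because the missing steps are precisely the parts nobody knows how to do in full generality.
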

Our present knowledge is insufficient to prove the above conjecture completely. As Mazur pointed out \cite[p. 34]{M77}, control of the $2$-torsion part of $J_0(pq)(\Q)_{\tor}$ is very difficult. Note that some of this conjecture is now proved by Ohta \cite{Oh14} and by the author \cite{Yoo5}.

In this paper, instead of studying the above conjecture, we consider an abelian variety $J^{pq}$, which is isogenous to the new quotient $J_0(pq)^{\new}$ of $J_0(pq)$. More specifically, let  $\cX^{pq}$ be the Shimura curve associated to the indefinite quaternion algebra over $\Q$ of discriminant $pq$ with trivial level structure. Let $J^{pq}$ be the Jacobian variety of $\cX^{pq}$, which is an abelian variety over $\Q$ of dimension $g(\cX^{pq})$, the genus of $\cX^{pq}$. From now on, we always assume that $g(\cX^{pq}) \neq 0$. Then, we prove the following theorem.

\begin{thm}\label{thm:maintheorem}
For a prime $\ell\geq 5$, the Jacobian $J^{pq}$ does not have rational points of order $\ell$ unless one of the following holds:
\begin{itemize}
\item $p\equiv q \equiv 1 \modl$; 
\item $p\equiv 1 \modl$ and $q^{\frac{p-1}{\ell}} \equiv 1 \pmod p$;
\item $q\equiv 1 \modl$ and $p^{\frac{q-1}{\ell}} \equiv 1 \pmod q$.
\end{itemize}
Furthermore, the Jacobian $J^{pq}$ does not have rational points of order $3$ if $(p-1)(q-1)$ is not divisible by $3$.
\end{thm}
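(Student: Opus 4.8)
The plan is to argue by contradiction. Suppose $P\in J^{pq}(\Q)$ has exact order $\ell$; I will show that one of the three congruence conditions of Theorem~\ref{thm:maintheorem} must then hold. The final assertion follows by taking $\ell=3$: the hypothesis $3\nmid (p-1)(q-1)$ forces $p\not\equiv 1$ and $q\not\equiv 1\pmod 3$, so none of the three conditions can be satisfied.

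First I would show that $P$ is \emph{Eisenstein}. The line $\br{P}\subset J^{pq}[\ell]$ is $\GQ$-stable with trivial Galois action. For a prime $r\nmid pq\ell$ of good reduction, the Eichler--Shimura relation on $\cX^{pq}$ gives $T_r=\Frob_r+r\,\Frob_r^{-1}$ on the reduction of $J^{pq}$; as $P$ is $\Q$-rational its reduction is $\Frob_r$-fixed, so $T_rP=(1+r)P$. Hence $P\in J^{pq}[\m]$ for an Eisenstein maximal ideal $\m$ of residue characteristic $\ell$ containing every $T_r-(1+r)$.

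The decisive input is the local structure at $p$ and $q$. By Cerednik--Drinfeld, $J^{pq}$ has purely toric reduction at both primes; at $p$ the Mumford uniformization writes $J^{pq}(\Qpbar)=T(\Qpbar)/L$ for a torus $T$ split over the unramified quadratic extension and a period lattice $L$, giving an exact sequence of $\GQp$-modules
\[
0\to T[\ell]\to J^{pq}[\ell]\to L/\ell L\to 0,
\]
with $T[\ell]=X_*(T)\otimes\muell$ and $L\cong X_*(T)$ unramified, $\Frob_p$ acting through the splitting involution $\sigma$. I would trace the trivial submodule $\br{P}$ through this sequence. If its image in $L/\ell L$ is nonzero, then the reduction of $P$ meets the component group $\Phi_p$ nontrivially, so $\ell\mid\#\Phi_p$; following Ribet's analysis of such component groups, a Cerednik--Drinfeld computation of $\Phi_p$ in terms of the Brandt module of the definite quaternion algebra ramified at $q$ and $\infty$ converts this divisibility into $q^{(p-1)/\ell}\equiv 1\pmod p$ (which already presupposes $\ell\mid p-1$). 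If instead the image is zero, then $P\in T[\ell]=X_*(T)\otimes\muell$, and a $\Frob_p$-fixed vector lies either in the $\sigma=+1$ eigenspace, forcing $p\equiv 1\pmod\ell$, or---only when $p\equiv -1\pmod\ell$---in the $\sigma=-1$ eigenspace; but the latter corresponds globally to a ramified quadratic twist and so supports no $\Q$-rational class. Intersecting the constraints coming from $p$ with those coming from $q$ reproduces exactly the three cases of the statement.

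The crux, and the main obstacle, is to make these assertions precise and Galois-equivariant: to pin down $X_*(T)$, the involution $\sigma$, and $\Phi_p$ from the Cerednik--Drinfeld model sharply enough (i) to read the divisibility $\ell\mid\#\Phi_p$ as the stated power-residue symbol, and (ii) to prove that the $\sigma=-1$ torus classes, although locally trivial when $p\equiv -1\pmod\ell$, carry a globally nontrivial quadratic character and hence cannot arise from a $\Q$-rational point. Granting (i) and (ii), the final assertion is immediate. For $\ell=3$ with $3\nmid (p-1)(q-1)$ we have $p\not\equiv 1\pmod 3$, so the $\sigma=+1$ torus option is excluded; since $3\nmid p-1$, the Eisenstein $3$-part of $\Phi_p$ vanishes, excluding the component-group option; and the $\sigma=-1$ option yields no $\Q$-rational class. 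Thus the reduction at $p$ of the genuinely $\Q$-rational point $P$ must be $0$. As the kernel of specialization to the special fiber is a pro-$p$ formal group and $3\neq p$, this forces $P=0$, contradicting that $P$ has order $3$.
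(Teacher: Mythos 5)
Your opening reduction (Eichler--Shimura forces $T_rP=(1+r)P$, so $P$ lies in the kernel of an Eisenstein maximal ideal $\m$ of residue characteristic $\ell$) agrees with the paper's first step, but both of the ``decisive'' local claims on which your argument then rests are gaps, and the first is false as stated. Concerning (i): the divisibility $\ell \mid \#\Phi_p(J^{pq})$ does \emph{not} convert into ``$\ell\mid p-1$ and $q^{(p-1)/\ell}\equiv 1 \pmod p$.'' By Ribet's exact sequence (\cite[Theorem 4.3]{R90}, quoted in Step~1 of the paper's proof of Theorem \ref{thm:multiplicityone}), $\Phi_p(J^{pq})$ is built from $(X\oplus X)/{\delta_p(X\oplus X)}$ with $X=X_q(J_0(q))$ and $\delta_p=\mat{p+1}{T_p}{T_p}{p+1}$, together with the kernel and cokernel of $\Phi_q(J_0(q))^2\to\Phi_q(J_0(pq))$. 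Its order therefore contains non-Eisenstein factors of the shape $(p+1)^2-a_p(f)^2$ for eigenforms $f$ of level $q$, and its Eisenstein part is governed by divisibilities involving $q-1$ and $p+1$ (via Mazur's Eisenstein ideal of level $q$ and $\Phi_q(J_0(pq))$) --- never by a power-residue symbol mod $p$. The condition $q^{(p-1)/\ell}\equiv 1 \pmod p$ is irreducibly global: in the paper it enters only through the criterion for $\m_1=(\ell,\,U_p-1,\,U_q-1,\,I_0)$ to be maximal (Theorem \ref{thm:m1}, via Jacquet--Langlands and Ribet's theorem on new Eisenstein ideals), i.e.\ through congruences between Eisenstein series and newforms, not through any computation on the Cerednik--Drinfeld fiber.

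Concerning (ii): your assertion that $\Frob_p$-fixed classes in the $\sigma=-1$ eigenspace of $T[\ell]$ ``cannot arise from a $\Q$-rational point'' is exactly the hard content, and no local argument can supply it, because such classes genuinely exist locally. For instance, if $\ell \mid p+1$ (so $p\equiv -1\modl$ and $\m_4$ is maximal by Theorem \ref{thm:m3}), then $\Frob_p$ acts on the torus part of $J_{/{\F_p}}[\m_4]$ by $pU_p\equiv 1 \pmod{\m_4}$, so this part is fixed by all of $\GQp$: these are precisely your $\sigma=-1$ classes, they are $\Q_p$-rational, and yet the theorem asserts they are not $\Q$-rational. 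Ruling them out is what the paper's multiplicity one theorem does (Theorem \ref{thm:multiplicityone}, applied to $\m_3$ and $\m_4$ by symmetry): it needs $\Phi_p(J)[\m]=0$, Gorensteinness of $\T_{\m}$ via character groups, the Skorobogatov subgroup supplying $\muell$, and a non-splitness argument to conclude $J[\m]$ is a non-trivial extension of $\zell$ by $\muell$, hence contains no rational $\zell$. Your proposal replaces all of this with the phrase ``ramified quadratic twist,'' and your $\ell=3$ endgame inherits both gaps: you infer vanishing of the Eisenstein $3$-part of $\Phi_p$ from $3\nmid p-1$ (which is the wrong mechanism; the paper instead uses Edixhoven's cyclicity of the $3$-part of $\Phi_q(J_0(pq))$ together with the $U_q$-action, or Ogg's table when $q=2$). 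Finally, you never separate the four eigenvalue patterns of $(U_p,U_q)$ given by Lemma \ref{lem:Up^2=1}, which is what organizes the paper's case analysis; as you yourself concede, the assertions (i) and (ii) are the crux, and without them the proof does not go through.
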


As an application of this theorem, we get information about the kernel of an isogeny between $J_0(pq)^{\new}$ and $J^{pq}$. (The existence of this isogeny is due to Ribet \cite[Th\`eor\`eme 2]{R80}.) More precisely,
let $\Psi(pq)$ denote such an isogeny over $\Q$, and let $K(pq)$ denote the kernel of $\Psi(pq)$.
By the careful study of bad reduction of Shimura curves, Ogg \cite{Og85} conjectured that the image of some cuspidal divisors in $J_0(pq)$ belongs to $K(pq)$. In the case of low genus Shimura curves, this conjecture was proved by Gonz\'alez and Molina \cite{GM11}.
More precisely, they found an equation of $\cX^{pq}$ in the case where $g(\cX^{pq}) \leq 3$ and computed $K(pq)$ (for chosen $\Psi(pq)$) by the consideration of bad reduction of $\cX^{pq}$.
Instead of finding an explicit equation of $\cX^{pq}$ and computing $K(pq)$, we prove that $K(pq)$ always contains $\pi(\cC_{\ell}(pq))$ if $\ell$ satisfies certain conditions, where $\cC_{\ell}(pq)$ is the $\ell$-primary subgroup of $\cC(pq)$ and $\pi$ is the quotient map from $J_0(pq)$ to $J_0(pq)^{\new}$.

\begin{thm}\label{thm:kernelofisogeny}
Let $\ell^m$ and $\ell^n$ be the exact powers of $\ell$ dividing $p+1$ and $q+1$, respectively.
If $\ell\geq 5$ and all the following conditions hold, then $K(pq)$ contains $\pi(\cC_{\ell}(pq))$, and the latter is isomorphic to $\zmod {\ell^m} \oplus \zmod {\ell^n}$:
\begin{itemize}
\item $\ell$ does not divide $(p-1, q-1)$; 
\item if $p\equiv 1 \modl$, then $q^{\frac{p-1}{\ell}} \not\equiv 1 \pmod p$;
\item if $q\equiv 1 \modl$, then $p^{\frac{q-1}{\ell}} \not\equiv 1 \pmod q$.
\end{itemize}

If $\ell=3$ and $(p-1)(q-1)$ is not divisible by $3$, then $K(pq)$ contains $\pi(\cC_3(pq))$, and the latter is isomorphic to $\zmod {3^{\alpha}} \oplus \zmod {3^{\beta}}$,  where $\alpha=\max \{0,~m-1\}$ and $\beta=\max \{0, ~n-1\}$.
\end{thm}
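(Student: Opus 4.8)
The plan is to deduce the inclusion $\pi(\cC_\ell(pq))\subseteq K(pq)$ from Theorem~\ref{thm:maintheorem}, and then to obtain the isomorphism type from a direct computation with the cuspidal group.

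First I would note that the three displayed conditions for $\ell\geq 5$ are exactly the negations of the three exceptional cases in Theorem~\ref{thm:maintheorem}: the case $p\equiv q\equiv 1\modl$ means $\ell\mid(p-1,q-1)$, which is excluded by the first condition, and the second and third conditions rule out the remaining two cases (if the relevant prime is $\equiv 1\modl$ then the power-residue hypothesis fails, and otherwise the congruence $\equiv 1\modl$ itself fails). Hence under all three conditions Theorem~\ref{thm:maintheorem} gives that $J^{pq}(\Q)$ has no point of order $\ell$; as any nontrivial $\ell$-power torsion point would produce one of order $\ell$, this means $J^{pq}(\Q)[\ell^{\infty}]=0$. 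For $\ell=3$ the hypothesis that $(p-1)(q-1)$ is not divisible by $3$ is precisely the condition in the last sentence of Theorem~\ref{thm:maintheorem}, so again $J^{pq}(\Q)[3^{\infty}]=0$.

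Next I would prove the inclusion. Because $pq$ is squarefree, all four cusps of $X_0(pq)$ are rational over $\Q$, so $\cC_\ell(pq)\subseteq J_0(pq)(\Q)$ and therefore $\pi(\cC_\ell(pq))\subseteq J_0(pq)^{\new}(\Q)$ is a finite $\ell$-primary group of rational points. Since $\Psi(pq)$ is defined over $\Q$, it carries $\pi(\cC_\ell(pq))$ into the $\ell$-primary part of $J^{pq}(\Q)$, which is trivial by the previous step; hence $\pi(\cC_\ell(pq))\subseteq\ker\Psi(pq)=K(pq)$. This argument is uniform in $\ell\geq 5$ and $\ell=3$.

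It remains to compute the isomorphism type of $\pi(\cC_\ell(pq))$, which I regard as the main work. I would first determine the $\ell$-primary part $\cC_\ell(pq)$ of the rational cuspidal group from the divisors of the modular ($\eta$-)units on $X_0(pq)$ supported on the four cusps, and then identify the kernel of $\pi$ on $\cC_\ell(pq)$ with the old classes, i.e.\ those pulled back from $\cC(p)$ and $\cC(q)$ along the degeneracy maps. These old classes carry the $(p-1)$- and $(q-1)$-primary information, whose $\ell$-parts are trivial (or killed) under the condition $\ell\nmid(p-1,q-1)$, so the surviving new part is generated by two explicit cuspidal divisors whose orders in the new quotient are the $\ell$-parts of $p+1$ and of $q+1$. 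For $\ell\geq 5$ these parts are $\ell^{m}$ and $\ell^{n}$, giving $\pi(\cC_\ell(pq))\simeq\zmod{\ell^{m}}\oplus\zmod{\ell^{n}}$, while for $\ell=3$ the factor $\gcd(12,\cdot)$ in the denominators of the cuspidal-divisor orders absorbs one power of $3$, dropping the exponents to $\alpha=\max\{0,m-1\}$ and $\beta=\max\{0,n-1\}$. The hard part is this last computation --- pinning down $\cC_\ell(pq)$, separating its old and new parts under the degeneracy maps, and evaluating the exact orders of the two new generators together with the $\ell=3$ correction; once Theorem~\ref{thm:maintheorem} is available, the inclusion itself is essentially formal.
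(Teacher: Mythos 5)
Your proposal is correct and follows essentially the same route as the paper: the inclusion $\pi(\cC_{\ell}(pq))\subseteq K(pq)$ is deduced exactly as in the paper (the image under $\Psi(pq)$ is a rational $\ell$-primary torsion subgroup of $J^{pq}$, which vanishes by Theorem \ref{thm:maintheorem}), and the isomorphism type is obtained by computing $\cC_{\ell}(pq)$ and its intersection with $\ker\pi=J_0(pq)_{\old}$. The only difference is that the computation you flag as ``the hard part'' is not redone in the paper either: it is quoted from Chua--Ling \cite{CL97}, who give generators $D_p$, $D_q$ of $\cC_{\ell}(pq)$ with their exact orders and show (for $\ell\geq 5$, assuming $q\not\equiv 1 \modl$ by symmetry) that $\cC_{\ell}(pq)\cap J_0(pq)_{\old}\simeq \br{\ell^n D_p}\oplus\br{\ell^m D_q}$, which yields $\pi(\cC_{\ell}(pq))\simeq \zmod{\ell^m}\oplus\zmod{\ell^n}$ and, via the denominator $3$ in the order formulas together with the vanishing of the $3$-primary old part, the exponent drop at $\ell=3$ --- so your sketch matches the content of that reference.
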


The organization of this article is as follows. In \textsection \ref{sec:Eisenstein}, we discuss all possible new Eisenstein maximal ideals of level $pq$. In \textsection \ref{sec:criteriaformaximality}, we give certain criteria on primes $p$ and $q$ for an Eisenstein ideal discussed in the previous section to be maximal. In \textsection \ref{sec:structure}, we study the structures of the kernels of Eisenstein maximal ideals on Jacobians. In \textsection \ref{sec:proof}, we deduce Theorem \ref{thm:maintheorem} from the above results. Finally, we prove Theorem \ref{thm:kernelofisogeny} in \textsection \ref{sec:kernelofisogeny}. 

\subsubsection*{Acknowledgements} We are grateful to Ken Ribet and Sug Woo Shin for valuable comments and discussions. 

\subsection{Notation}
Let $B$ be a quaternion algebra over $\Q$ of discriminant $D$ such that $\phi : B\otimes_{\Q} \R \simeq M_2(\R)$. Let $\cO$ be an Eichler order of level $N$ of $B$ and let $\cO^{\times, 1}$ be the set of (reduced) norm 1 elements in $\cO$. We define $\Gamma_0^D(N):=\phi(\cO^{\times, 1})$. Let $X_0^D(N)$ be the Shimura curve over $\Q$ associated to $B$ with $\Gamma_0^D(N)$ level structure and let $J_0^D(N):=\Pic^0(X_0^D(N))$ be its Jacobian variety. If $D=1$, then $X_0(N)=X_0^1(N)$ denotes the modular curve for $\Gamma_0(N)$ and $J_0(N)=J_0^1(N)$ denotes its Jacobian variety. If $D\neq 1$, then $X_0^D(N)(\C) \simeq \Gamma_0^D(N)\backslash \bH$, where $\bH$ is the complex upper half plane. 

For an integer $n\geq 1$, there is a Hecke operator $T_n$ acting on $J_0^D(N)$. We denote by $\T^D(N)$ the $\Z$-subalgebra of the endomorphism ring of $J_0^D(N)$ generated by
all $T_n$. In the case where $D=1$ (resp. $N=1$), we simply denote by $\T(N)$ (resp. $\T^D$) the Hecke ring $\T^1(N)$ (resp. $\T^D(1)$). 
If $p$ divides $DN$, we often denote by $U_p$ the $p^{\text{th}}$ Hecke operator $T_p$ on $J_0^D(N)$. 
For a prime $p$ dividing $N$, there is also an Atkin-Lehner involution $w_p$ on $J_0^D(N)$. 
For a maximal ideal $\m$ of a Hecke ring $\T$, we denote by $\T_{\m}$ the completion of $\T$ at $\m$, i.e., 
$$
\T_{\m}:= \plim n \T/{\m^n}.
$$

\section{Eisenstein ideals in $\T^{pq}$}\label{sec:Eisenstein}
From now on, we fix distinct primes $p$ and $q$; and $\ell$ denotes an odd prime. Let $\T:=\T^{pq}$ and $I_0:=(T_r-r-1~:~\text{for primes } r \nmid pq) \subset \T$.

\begin{lem}\label{lem:Up^2=1}
We have $U_p^2=U_q^2=1 \in \T$.
\end{lem}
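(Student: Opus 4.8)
The plan is to reduce the identity to classical Atkin--Lehner theory, exploiting that $p$ and $q$ are precisely the primes at which the quaternion algebra ramifies. Since $\cX^{pq}=X_0^{pq}(1)$ has level $N=1$ and discriminant $D=pq$, both $p$ and $q$ divide $D$; by the Jacquet--Langlands correspondence the Hecke ring $\T=\T^{pq}$ is isomorphic, as an algebra equipped with the operators $T_n$, to the Hecke ring acting on the space of weight-$2$ cusp forms on $\Gamma_0(pq)$ that are new at both $p$ and $q$. I would record this identification first, under which $U_p=T_p$ and $U_q=T_q$ correspond to the usual $p$th and $q$th Hecke operators on that space.

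Next I would use that this Hecke ring is reduced. The space of newforms has a basis of normalized eigenforms, and the association $T_n\mapsto (a_n(f))_f$, with $f$ running over the relevant newforms, defines an injection $\T\inj \prod_f \cO_f$ into a product of rings of integers; faithfulness here is exactly the statement that a Hecke operator killing every eigenform is zero. Consequently it suffices to verify $U_p^2=1$ and $U_q^2=1$ after projecting to each factor, i.e. to check that $a_p(f)^2=1$ and $a_q(f)^2=1$ for every newform $f$ in play.

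The key local input is then the standard computation of $a_p$ for a newform that is new at $p$: for a weight-$2$ newform $f$ of level $pq$ with $p\,\|\,pq$, the form is special (Steinberg up to an unramified quadratic twist) at $p$, so its Atkin--Lehner eigenvalue $\varepsilon_p(f)=\pm 1$ satisfies $a_p(f)=-\varepsilon_p(f)$; in particular $a_p(f)=\pm 1$ and hence $a_p(f)^2=1$. The same holds verbatim at $q$. Feeding this into the previous paragraph yields $U_p^2=U_q^2=1$ in $\T$.

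I expect the only real subtlety to be bookkeeping rather than mathematics: pinning down the Jacquet--Langlands identification so that $T_p$ on the quaternionic side really corresponds to $U_p$ on the classical side (and, relatedly, fixing the sign conventions), together with the faithfulness/reducedness of $\T$. An alternative, more intrinsic route avoids passing to classical forms: on the Shimura curve there is an Atkin--Lehner involution attached to each prime dividing $D$, and one shows directly that $U_p=-w_p$ as endomorphisms of $J^{pq}$; since $w_p$ is induced by an automorphism of $\cX^{pq}$ of order $2$ one has $w_p^2=1$, whence $U_p^2=(-w_p)^2=1$, and likewise for $q$. Either way the crux is the identification of $U_p$ with $\pm w_p$, which is exactly where the ramification of $B$ at $p$ and $q$ enters.
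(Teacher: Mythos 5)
Your proposal is correct and is essentially the paper's own argument: the paper likewise rests on the Atkin--Lehner relation $U_p+w_p=0$ on the space of newforms (\cite[Proposition 3.7]{R90}), so that $U_p$ is an involution there, and transfers this to $\T^{pq}$ via the Hecke-equivariance of Ribet's isogeny $\Psi(pq)$ (\cite[\textsection 4]{R90}) --- which is exactly your Jacquet--Langlands identification in geometric form, and is also your ``alternative, more intrinsic route.'' Your intermediate step through reducedness of $\T$ (checking $a_p(f)^2=a_q(f)^2=1$ eigenform by eigenform) is sound but not needed, since the operator identity $U_p=-w_p$ together with $w_p^2=1$ already yields the claim directly.
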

\begin{proof}
Let $w_p$ and $w_q$ be Atkin-Lehner involutions on $J_0(pq)$. Then $U_p+w_p=0$ on the space of newforms (cf. \cite[Proposition 3.7]{R90}). Because $\Psi(pq)$ is Hecke-equivariant (cf. \cite[\textsection 4]{R90}), $U_p$ and $U_q$ are also involutions on $J^{pq}$.
\end{proof}

\begin{defn}
We define Eisenstein ideals containing $I_0$ as follows: 
$$
I_1 := (U_p-1,~U_q-1~, I_0), \quad\quad\quad\quad\quad I_2 := (U_p+1,~U_q+1,~I_0),
$$
\vspace{-8mm}
$$
I_3 := (U_p-1,~U_q+1~, I_0) \quad\quad\text{and}\quad\quad I_4 := (U_p+1,~U_q-1,~I_0).
$$
Moreover, we set $\m_i :=(\ell, ~I_i)$. They are all possible Eisenstein maximal ideals in $\T$ by the above lemma.
\end{defn}

Let $\T_{\ell}:=\T \otimes_{\Z} \Z_{\ell}$. Then it is a semi-local ring and we have
$$
\T_{\ell} = \prod_{\ell \in \m~ \text{maximal}} \T_{\m}.
$$

Using the above description of Eisenstein maximal ideals, we prove the following.

\begin{prop}\label{prop:decomposition}
The quotient $\T_{\ell}/{I_0}$ decomposes as follows:
$$
\T_{\ell}/{I_0} = \prod_{i=1}^4 \T_{\m_i}/I_0 \simeq \prod_{i=1}^4 \T_{\ell}/{I_i}.
$$
\end{prop}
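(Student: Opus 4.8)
The plan is to deduce everything from the semi-local decomposition $\T_{\ell} = \prod_{\m} \T_{\m}$ recorded just before the statement, where $\m$ runs over the maximal ideals of $\T$ containing $\ell$. Since $I_0$ is an ideal of $\T$ and forming quotients commutes with localization across the finitely many factors, reducing modulo $I_0$ gives $\T_{\ell}/I_0 = \prod_{\m} \T_{\m}/I_0\T_{\m}$. So the first task is to determine which factors survive, i.e.\ for which $\m$ the quotient $\T_{\m}/I_0\T_{\m}$ is nonzero.

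First I would show that $\T_{\m}/I_0\T_{\m} \neq 0$ forces $\m \in \{\m_1, \m_2, \m_3, \m_4\}$. Indeed, if $I_0 \not\subseteq \m$, then $I_0\T_{\m}$ is an ideal of the local ring $\T_{\m}$ not contained in its maximal ideal, hence equals $\T_{\m}$, so the quotient vanishes. Thus only the $\m \supseteq I_0$ contribute. For such $\m$ the residue field $\T/\m$ has odd characteristic $\ell$, and Lemma~\ref{lem:Up^2=1} gives $U_p^2 = U_q^2 = 1$, so the images of $U_p$ and $U_q$ in $\T/\m$ are each $\pm 1$; together with $T_r \equiv r+1$ for $r \nmid pq$ this shows $\T/\m = \F_{\ell}$ and that $\m$ equals one of the $\m_i$. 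Conversely each $\m_i$, when proper, is maximal with residue field $\F_{\ell}$. This yields the first asserted equality $\T_{\ell}/I_0 = \prod_{i=1}^{4}\T_{\m_i}/I_0$, with the convention that a factor is $0$ whenever $\m_i = \T$.

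It remains to identify $\T_{\m_i}/I_0$ with $\T_{\ell}/I_i$. Fix $i$ and let $\epsilon_p, \epsilon_q \in \{\pm 1\}$ be the signs with $U_p \equiv \epsilon_p$ and $U_q \equiv \epsilon_q \pmod{\m_i}$, so that $I_i = (U_p - \epsilon_p,~U_q - \epsilon_q,~I_0)$. In the local ring $\T_{\m_i}$ the element $U_p + \epsilon_p$ reduces to $2\epsilon_p \pmod{\m_i}$, hence is a unit since $\ell$ is odd; as $(U_p - \epsilon_p)(U_p + \epsilon_p) = U_p^2 - 1 = 0$, we get $U_p - \epsilon_p = 0$ in $\T_{\m_i}/I_0\T_{\m_i}$, and likewise $U_q - \epsilon_q = 0$. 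Therefore $I_0\T_{\m_i} = I_i\T_{\m_i}$, whence $\T_{\m_i}/I_0 = \T_{\m_i}/I_i = (\T_{\ell}/I_i)_{\m_i}$. Finally, in $\T_{\ell}/I_i$ each Hecke operator $T_n$ maps to a scalar in $\Z_{\ell}$, so $\T_{\ell}/I_i$ is generated by $1$ over $\Z_{\ell}$ and is thus a quotient of $\Z_{\ell}$; in particular it is local, so localizing at $\m_i$ does nothing and $\T_{\m_i}/I_0 \simeq \T_{\ell}/I_i$.

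The main obstacle I anticipate is the bookkeeping around the degenerate cases: one must allow some $\m_i$ to be the unit ideal (no Eisenstein maximal ideal of that type), in which case both $\T_{\m_i}/I_0$ and $\T_{\ell}/I_i$ are zero and the product formula still holds; and one must use the oddness of $\ell$ in two places, namely to conclude $U_p, U_q \equiv \pm 1$ in each residue field and to invert $U_p + \epsilon_p$ and $U_q + \epsilon_q$ in $\T_{\m_i}$. Everything else is the standard interplay between the semi-local decomposition and quotients by an ideal contained in only finitely many of the maximal ideals.
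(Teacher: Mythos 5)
Your proof is correct, and its skeleton is the same as the paper's: both arguments rest on the semi-local decomposition $\T_{\ell}=\prod_{\m}\T_{\m}$, and both turn on the same key step --- since $\ell$ is odd and $U_p\equiv\epsilon_p \pmod{\m_i}$, the element $U_p+\epsilon_p$ is a unit in $\T_{\m_i}$, so Lemma~\ref{lem:Up^2=1} forces $U_p-\epsilon_p=0$ in $\T_{\m_i}$ and hence $I_0\T_{\m_i}=I_i\T_{\m_i}$. The one genuine difference is how you finish, i.e.\ how you identify $\T_{\m_i}/I_i$ with $\T_{\ell}/I_i$: the paper invokes the finiteness of the index of $I_i$ in $\T$ (citing \cite[Lemma 3.1]{Yoo14}) to get $\m_i^n\subseteq I_i$ inside $\T_{\ell}$, so that $\T_{\ell}/I_i$ is supported only at $\m_i$, whereas you observe that every Hecke generator becomes a $\Z_{\ell}$-scalar modulo $I_i$, so $\T_{\ell}/I_i$ is a quotient of $\Z_{\ell}$, hence local or zero, with maximal ideal the image of $\m_i$. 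Your variant is self-contained, avoids the external citation, and treats the degenerate case $\m_i=\T$ uniformly; the paper's variant gives the slightly stronger nilpotency statement $\m_i^n\T_{\ell}\subseteq I_i\T_{\ell}$ as a by-product. You are also more explicit than the paper about why the four ideals $\m_i$ exhaust the maximal ideals containing $(\ell,\,I_0)$; in the paper this point is absorbed into the remark following the definition of the $\m_i$ rather than re-proved inside the proposition.
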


\begin{proof}
It suffices to prove that $\T_{\ell}/{I} \simeq \T_{\m}/I_0$, where $I:=I_i$ and $\m:=(\ell, ~I)$. 
We discuss the case where $i=1$ and other cases are basically the same. 

If $\m$ is not maximal, then $\T_{\m}=0=\T_{\ell}/I$. Therefore we may assume that $\m$ is maximal. 
Since $\ell$ is odd and $U_p-1 \in \m$, we have $U_p+1 \not\in \m$. In other words, $U_p+1$ is a unit in $\T_{\m}$. By Lemma \ref{lem:Up^2=1}, we have $U_p-1=0$ in $\T_{\m}$ and hence $U_p-1 \in I_0$. Similarly we have $U_q-1 \in I_0$. Therefore $\T_{\m}/{I_0} = \T_{\m}/I$. Since the index of $I$ in $\T$ is finite (cf. \cite[Lemma 3.1]{Yoo14}), there is an integer $n$ such that $\m^n \subseteq I$. Thus, we have $(\T_{\ell}/{\m^n})/I = \T_{\ell}/I$ and hence $\T_{\m}/I \simeq \T_{\ell}/I$.
\end{proof}

\section{Criteria for $\m$ to be maximal}\label{sec:criteriaformaximality}
In this section, we discuss certain conditions on the primes $p$ and $q$ for which $\m_i$ is maximal. By the Jacquet-Langlands correspondence, it suffices to show that $\m_i$ is new maximal in $\T(pq)$ under the given assumption. 

\subsection{Maximality of $\m_1$} \label{sec:s=2}
\begin{thm}\label{thm:m1}
The ideal $\m_1$ is maximal in $\T$ if and only if one of the following holds:
\begin{itemize}
\item $p \equiv q \equiv 1 \modl$;
\item $\ell$ divides the numerator of $\frac{p-1}{3}$ and $q^{\frac{p-1}{\ell}}\equiv 1 \pmod p$;
\item $\ell$ divides the numerator of $\frac{q-1}{3}$ and $p^{\frac{q-1}{\ell}}\equiv 1 \pmod q$.
\end{itemize} 
\end{thm}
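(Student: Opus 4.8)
The plan is to transport the problem to level $pq$ on the classical side. By the Jacquet--Langlands correspondence, $\T=\T^{pq}$ is isomorphic to the $pq$-new quotient of $\T(pq)$, and by Lemma~\ref{lem:Up^2=1} the operators $U_p$ and $U_q$ become the negatives of the Atkin--Lehner involutions $w_p,w_q$ on the new subspace (cf.\ \cite{R90}). Thus $\m_1$ is maximal in $\T$ if and only if the new Eisenstein ideal $I_1=(U_p-1,\,U_q-1,\,I_0)$ has index divisible by $\ell$ in this new quotient; since $I_1$ already has finite index (\cite[Lemma~3.1]{Yoo14}), this is equivalent to the assertion that the normalized weight-$2$ Eisenstein series of level $pq$ with $U_p$- and $U_q$-eigenvalue $1$ and $T_r$-eigenvalue $r+1$ (for $r\nmid pq$) is congruent modulo $\ell$ to a $pq$-new cusp form. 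My whole strategy is to decide, case by case, when this single congruence can be made \emph{new at both $p$ and $q$}.

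First I would dispose of the generic case $p\equiv q\equiv 1\modl$ by exhibiting the congruence directly in the cuspidal divisor class group of $X_0(pq)$. The orders of the relevant cuspidal divisors, equivalently the constant terms of the Eisenstein series at the four cusps, are $p-1$, $q-1$, and $(p-1)(q-1)$ \upto; when $\ell$ divides both $p-1$ and $q-1$ these are divisible by $\ell$, and Mazur's argument (\cite{M77}) then produces an $\m_1$-torsion point in the new part, forcing $\m_1$ to be maximal. Running the same constant-term computation in reverse shows that if $\ell$ divides neither $p-1$ nor $q-1$, then the Eisenstein series cannot be congruent to a form that is new at both primes, so at least one of $p\equiv 1$, $q\equiv 1\modl$ is necessary. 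This isolates the three bullets; the replacement of $p-1$ by the numerator of $\frac{p-1}{3}$ is exactly the bookkeeping needed to handle the extra factor of $3$ that survives the weight-$2$, level-$pq$ normalization when $\ell=3$.

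The heart of the theorem --- and the step I expect to be the main obstacle --- is the refined criterion in the mixed cases, say $\ell\mid p-1$ but $\ell\nmid q-1$. Here divisibility of constant terms is automatic but insufficient, because the resulting congruence is a priori only with a form that is \emph{old} at $q$; one must decide whether it can be realized by a genuinely $q$-new form. I would phrase this through the residual representation $\brho\cong \mathbf{1}\oplus\chi$ attached to $\m_1$, where $\chi$ is the mod-$\ell$ cyclotomic character. Newness at $q$ imposes a local condition on the corresponding extension class in $H^1(\GQ,\chi)$, while the behaviour at $p$ pins down its ramification there; by Kummer theory this class is built from the images of $p$ and $q$ in $\Q^\times/(\Q^\times)^\ell$. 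Evaluating the relevant local pairing at $q$ by Tate duality reduces the obstruction to a cup product whose value is precisely the $\ell$-th power residue symbol of $q$ modulo $p$, so that the new congruence exists if and only if $q^{\frac{p-1}{\ell}}\equiv 1\pmod p$. This is the Merel--Lecouturier style computation that produces the arithmetic conditions in the statement, and executing it carefully in both directions is where the real difficulty lies. The case $\ell\mid q-1$, $\ell\nmid p-1$ follows by symmetry in $p$ and $q$, giving the third bullet and completing the equivalence.
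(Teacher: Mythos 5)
Your opening reduction --- Jacquet--Langlands plus $U_p=-w_p$ on newforms, so that the question becomes new maximality of $\m_1$ in $\T(pq)$ --- is exactly the paper's first step, and your use of constant terms/cuspidal orders to rule out maximality when $\ell\nmid(p-1)(q-1)$ matches the paper's appeal to the index computation of \cite[Theorem 5.1]{Yoo15a}. After that, however, the proposal has a genuine gap at precisely the point where the paper instead invokes Ribet's theorem \cite[Theorem 2.4]{Yoo14a} together with Mazur's level-$p$ theory \cite{M77}. In the mixed case $\ell\mid p-1$, $\ell\nmid q-1$, you treat as an equivalence the statement that a congruence with a form new at both $p$ and $q$ exists if and only if a certain Kummer class in $H^1(\GQ,\chi)$ with prescribed local behaviour exists, and you then decide the latter by a cup-product/power-residue computation. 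Neither implication is formal. The direction you need most --- from vanishing of the cohomological obstruction to the existence of an actual $pq$-new eigenform, i.e.\ to $\m_1$ surviving in the new quotient of the Hecke algebra --- is the actual content of the theorem: note that in this case $\m_1$ is \emph{always} maximal in the full algebra $\T(pq)$, being $q$-old (it comes from the maximal Eisenstein ideal $\lambda$ of level $p$), so what must be produced is a newform in its eigenspace, and no amount of Galois cohomology alone produces modular forms. The paper's route (following Ribet) is to prove the equivalence ``$\m_1$ new maximal $\iff$ $IR\neq(T_q-q-1)R$ in $R=\T(p)_{\lambda}$'' by geometric arguments with the old subvariety and character groups, and then to quote Mazur's theory of good primes, which is where $q^{(p-1)/\ell}\bmod p$ actually enters; the modern cohomological substitute (Wake--Wang-Erickson) requires an $R=\T$ theorem for a pseudodeformation problem, not just the cup-product evaluation you describe. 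Moreover your local dictionary is backwards: ramification of the residual class at $q$ implies $q$-newness, but a $q$-new form can perfectly well have residual representation unramified at $q$, so ``newness at $q$ imposes a local condition on the extension class'' is false as stated, and with it the ``only if'' half of your computation collapses as well.

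Two further gaps. First, in the case $p\equiv q\equiv 1\modl$ with $\ell\geq 5$, your cuspidal-divisor argument shows only that $\m_1$ is maximal in $\T(pq)$; the claim that Mazur's argument puts the torsion ``in the new part'' is unsupported and cannot be finessed, because here $\m_1$ is simultaneously $p$-old and $q$-old, so the escape ``maximal but not old, hence new'' is unavailable, and the image of the cuspidal group in the new quotient could a priori vanish. (The paper obtains this case from the same Ribet criterion, not from the cuspidal group.) Second, for $\ell=3$ the hypothesis is that $3$ divide the numerator of $\frac{p-1}{3}$, i.e.\ $p\equiv 1\pmod 9$; this is Mazur's criterion for the $3$-Eisenstein ideal of level $p$ to be maximal, a genuine arithmetic input rather than ``bookkeeping'' of a factor of $3$, and the leftover subcase where $p-1$ and $q-1$ are both exactly divisible by $3$ needs its own argument (in the paper: $\m_1$ is then maximal by the index computation and new because it is neither $p$-old nor $q$-old). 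You also never address $\ell\in\{p,q\}$, where the finite-flat-at-$\ell$ step of your Kummer-theoretic setup breaks down; the paper covers this by a separate remark.
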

\begin{proof}
Let $\m:=\m_1$ and $I:=I_1$.

If $\ell\geq 5$ and $\ell$ does not divide $pq$, this is proved by Ribet \cite[Theorem 2.4]{Yoo14a} (The proof of this theorem is given in \textsection 4 of \textit{op. cit.})

Since the index of $I$ in $\T(pq)$ is equal to the numerator of $\frac{(p-1)(q-1)}{3}$ up to powers of 2 \cite[Theorem 5.1]{Yoo15a}, we assume that $\ell$ divides the numerator of $\frac{(p-1)(q-1)}{3}$, and hence $\m$ is maximal in $\T(pq)$.

Let $\ell=3$ and let $\lambda$ be the ideal of $\T(p)$ corresponding to $\m$. By Mazur \cite{M77}, $\lambda$ is maximal if and only if $p \equiv 1 \pmod 9$. 
First, we assume that $p \equiv 1 \pmod 9$. Let $R:=\T(p)_{\lambda}$ be the completion of $\T(p)$ at $\lambda$, and let $I$ be the Eisenstein ideal of $\T(p)$. 
Then, $IR\neq (T_q-q-1)R$ if and only if either $q\equiv 1 \pmod 3$ or $q^{\frac{p-1}{3}}\equiv 1 \pmod p$. By the same argument as in the proof of \cite[Theorem 2.4]{Yoo14a}, $\m$ is new maximal if and only if $IR \neq (T_q-q-1)R$. Therefore by symmetry, the result follows unless $p-1$ and $q-1$ are exactly divisible by $3$. Next, we assume that $p-1$ and $q-1$ are exactly divisible by $3$. Then $\m$ is new because it is neither $p$-old nor $q$-old. 

If $\ell\geq 5$, the same method as above works and the result follows directly.
\end{proof}

\begin{rem}
In the above proof, we don't need to assume that $\ell$ does not divide $pq$. Note that one direction in the proof of \cite[Theorem 2.4]{Yoo14a} relies on the saturation property of $\T(pq)$ in $\End(J_0(pq))$ locally at $\m$. If either $\ell=p$ or $\ell=q$, this property follows from the second case of \cite[Theorem 3.3]{Yoo14a} because $T_{\ell} \equiv 1 \pmod {\m}$ (cf. \cite[Lemma 1.1]{R08} or \cite[Remark 3.5]{Yoo14a}). The other direction follows by the same argument as in the proof of \cite[Theorem 2.4]{Yoo14a} without further difficulties.
\end{rem}

\subsection{Maximality of $\m_2$}
\begin{thm}\label{thm:m2}
The ideal $\m_2$ cannot be maximal.
\end{thm}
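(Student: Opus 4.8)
The plan is to exploit the Jacquet--Langlands correspondence, exactly as in the analysis of $\m_1$: since it identifies $\T=\T^{pq}$ with the new quotient of $\T(pq)$, it suffices to show that $\m_2$ is \emph{not} a new maximal ideal of $\T(pq)$. By Lemma \ref{lem:Up^2=1} every maximal ideal of $\T$ satisfies $U_p,U_q\in\{\pm1\}$ (in a residue field of odd characteristic the relation $U_p^2=1$ forces $U_p\equiv\pm1$), so a new Eisenstein maximal ideal is precisely the datum of a weight-$2$ newform $f$ of level $pq$ admitting a congruence modulo a prime $\lambda\mid\ell$ of its coefficient ring with a weight-$2$ Eisenstein series of level $pq$ whose Hecke eigenvalues match those prescribed by $I_2$; that is, $a_r(f)\equiv r+1\modl$ for $r\nmid pq$ together with $U_pf\equiv -f$ and $U_qf\equiv -f\modl$. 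Thus the theorem reduces to showing that no such congruence can exist when the Eisenstein side is forced to have both $U_p$- and $U_q$-eigenvalue $\equiv -1\modl$.

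The key structural input I would use is the classification of the weight-$2$ Eisenstein eigenforms of level $pq$ with $T_r\equiv r+1$. The relevant Eisenstein space is three-dimensional and is spanned by the forms obtained from $E_2(z)-pE_2(pz)$ and $E_2(z)-qE_2(qz)$ (the level-$p$ and level-$q$ Eisenstein series, each having $U$-eigenvalue the integer $1$ at its own prime) by the standard degeneracy construction: on the two-dimensional oldspace attached to $E_2(z)-pE_2(pz)$ the operator $U_q$ has characteristic polynomial $(X-1)(X-q)$, and symmetrically at the other prime. One checks that the resulting eigenforms have $(U_p,U_q)$-eigenvalues exactly $(1,1)$, $(1,q)$ and $(p,1)$. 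The conceptual point is that the fourth a priori possibility $(p,q)$ does \emph{not} occur, precisely because $E_2$ itself is only quasimodular; equivalently, the Eisenstein space has dimension $3$, not $4$. In particular, \emph{every} weight-$2$ Eisenstein eigenform of level $pq$ with $T_r\equiv r+1$ has at least one of its $U_p$- and $U_q$-eigenvalues equal to the integer $1$. For the precise statement I would cite the classification of new Eisenstein maximal ideals of $\T(pq)$ from \cite{Yoo14, Yoo14a}.

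With this in hand the obstruction is immediate. Since $\ell$ is odd, $1\not\equiv -1\modl$. If a newform $f$ were congruent to one of the three Eisenstein series, then for that series we would need $U_p\equiv -1$ and $U_q\equiv -1\modl$; but the series $(1,1)$ and $(1,q)$ have $U_p$-eigenvalue the integer $1$, forcing $1\equiv -1\modl$, while the remaining series $(p,1)$ has $U_q$-eigenvalue the integer $1$, again forcing $1\equiv -1\modl$. Each case is absurd, so no congruence exists and $\m_2$ is not new maximal; by Jacquet--Langlands it is not maximal in $\T$. The one point requiring care---and the place I expect the main technical work---is making the correspondence between new Eisenstein maximal ideals and Eisenstein congruences fully rigorous when $\ell\in\{p,q\}$, where one must invoke the saturation of $\T(pq)$ in $\End(J_0(pq))$ locally at $\m$ as in the remark following Theorem \ref{thm:m1}. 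The eigenvalue obstruction itself, however, is insensitive to this, since reducing $p$ or $q$ to $0\modl$ only moves the offending eigenvalue further away from $-1$.
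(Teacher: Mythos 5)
There is a genuine gap, and it sits at the heart of your argument: the claim that a new Eisenstein maximal ideal of $\T(pq)$ ``is precisely the datum'' of a newform congruent to a characteristic-zero Eisenstein eigenform \emph{with matching $U_p$- and $U_q$-eigenvalues} mod $\ell$. What is formally true for a maximal ideal containing $I_0$ is only that the residual eigensystem agrees with the Eisenstein eigensystem at the operators $T_r$ for $r\nmid pq$; nothing forces the eigenvalues at $U_p$ and $U_q$ to be congruent to those of an actual Eisenstein eigenform of level $pq$. Your dimension count (three eigenforms with $(U_p,U_q)$-eigenvalues $(1,1)$, $(1,q)$, $(p,1)$, and no fourth eigenform $(p,q)$ because $E_2$ is quasimodular) is correct, but it only constrains the Eisenstein \emph{subspace}; it does not preclude a \emph{cuspidal} eigensystem that is residually Eisenstein away from $pq$ while having $(U_p,U_q)\equiv(-1,-1)\modl$. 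The dangerous case is exactly $p\equiv q\equiv -1 \modl$: there $(-1,-1)\equiv (p,q)\modl$, i.e., $\m_2$ would realize, residually, precisely the ``phantom fourth eigensystem'' that has no characteristic-zero Eisenstein incarnation. Ruling this out is a theorem, not a formal consequence of the Eisenstein space being $3$-dimensional; your argument is circular at this point, since the ``classification of new Eisenstein maximal ideals'' you propose to cite is itself the nontrivial content being proved.

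The paper's proof makes this structure explicit: a local argument at $p$ and $q$ (via \cite[Theorem 1.2.(3)]{Yoo14a}) shows that if $\m_2$ were maximal then necessarily $p\equiv q\equiv -1\modl$ and $\m_2=(\ell,\,U_p-p,\,U_q-q,\,I_0)$ --- note that this classification does \emph{not} yet yield a contradiction, contrary to what your eigenvalue obstruction would suggest --- and only then is the remaining case killed by the separate, genuinely arithmetic input \cite[Proposition 5.5]{Yoo15a}. Your final observation about the three honest Eisenstein eigensystems (each has the integer $1$ as eigenvalue at one of the two primes, and $1\not\equiv -1\modl$ for $\ell$ odd) correctly disposes of those three, but is silent on the fourth residual possibility, which is the whole difficulty. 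Likewise, the case $\ell\in\{p,q\}$ is not handled by your ``moves the eigenvalue further from $-1$'' remark, which again presupposes the unproven matching; the paper instead uses the standard fact that $T_\ell\equiv 1\pmod{\m}$ for any Eisenstein maximal ideal, so $U_p\equiv -1$ is impossible when $\ell=p$ since $\ell$ is odd.
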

\begin{proof}
For an Eisenstein maximal ideal $\m$, we have $T_{\ell}\equiv 1 \pmod {\m}$. Therefore $\m_2$ is not maximal if either $\ell=p$ or $\ell=q$ because $\ell$ is odd. 
Thus, we assume that $\ell$ does not divide $pq$ and $\m_2$ is maximal. By \cite[Theorem 1.2.(3)]{Yoo14a}, we have $p\equiv q \equiv -1 \modl$ and $\m_2=(\ell, ~U_p-p,~U_q-q,~I_0)$. By \cite[Proposition 5.5]{Yoo15a}, $\m_2$ cannot be maximal, which is a contradiction. Therefore the result follows.
\end{proof}

\subsection{Maximality of $\m_3$ and $\m_4$}\label{sec:m3}
\begin{thm}\label{thm:m3}
The ideal $\m_3$ is maximal if and only if $\ell$ divides the numerator of $\frac{q+1}{(3, ~p(p+1))}$.
By symmetry, the ideal $\m_4$ is maximal if and only if $\ell$ divides the numerator of $\frac{p+1}{(3, ~q(q+1))}$.
\end{thm}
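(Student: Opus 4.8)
The plan is to reduce the maximality of $\m_3$ to a statement about new Eisenstein maximal ideals of level $pq$ via the Jacquet--Langlands correspondence, exactly as indicated at the start of \textsection\ref{sec:criteriaformaximality}, and then to analyze $\m_3$ through the two partial levels $p$ and $q$ according to the signs of $U_p$ and $U_q$. Recall that $\m_3 = (\ell,~U_p-1,~U_q+1,~I_0)$, so the obstruction to maximality comes from the asymmetry between the two Atkin--Lehner/Hecke eigenvalue conditions. First I would split into the cases $\ell=p$ or $\ell=q$ versus $\ell\nmid pq$, disposing of the former by the congruence $T_\ell \equiv 1 \pmod{\m}$ for an Eisenstein maximal ideal (as was done for $\m_2$ in Theorem~\ref{thm:m2}) together with the defining relations of $\m_3$.

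\textbf{The main case $\ell\nmid pq$.} Here the strategy is to identify the numerical index of the Eisenstein ideal $I_3$ in $\T(pq)$ and relate $\ell$-divisibility of this index to the maximality of $\m_3$. By \cite[Theorem 1.2]{Yoo14a} and the classification of new Eisenstein maximal ideals, the conditions $U_p\equiv 1$ and $U_q\equiv -1$ modulo $\m_3$ force congruence conditions on $p$ and $q$; because the $U_q+1$ condition is the ``minus'' type, one expects $\m_3$ to be $q$-new and to see $q+1$ rather than $q-1$ in the relevant index, while the $U_p-1$ condition is of the ``plus'' type and contributes no constraint beyond $p$-oldness. I would therefore invoke the index computation from \cite[Theorem 5.1]{Yoo15a} (or its analogue for the mixed ideal) to show that the index of $I_3$ is, up to powers of $2$ and a factor accounting for $\ell=3$, equal to the numerator of $\frac{q+1}{(3,~p(p+1))}$. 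The factor $(3,~p(p+1))$ should emerge from the $\ell=3$ bookkeeping in the partial-level analysis, precisely paralleling how the numerator of $\frac{p-1}{3}$ appeared in Theorem~\ref{thm:m1}.

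Concretely, I would set $\lambda$ to be the ideal of $\T(q)$ (the level where the ``minus'' condition lives) corresponding to $\m_3$, and use Mazur's results \cite{M77} to determine when $\lambda$ is maximal, then run the same old/new argument as in the proof of Theorem~\ref{thm:m1}: let $R:=\T(q)_\lambda$, and show that $\m_3$ is new maximal if and only if $I_3 R \neq (T_p - 1)R$ (or the appropriate relation encoding $p$-oldness with the $U_p-1$ eigenvalue). The symmetry statement for $\m_4$ would then follow by interchanging the roles of $p$ and $q$, since $\m_4 = (\ell,~U_p+1,~U_q-1,~I_0)$ is obtained from $\m_3$ by swapping $p\leftrightarrow q$.

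\textbf{The main obstacle.} The hardest part will be pinning down the exact local structure at $\ell=3$ and, in particular, justifying the denominator $(3,~p(p+1))$: one must show that the presence of a $3$-torsion Eisenstein contribution at level $p$ (governed by $p\equiv 1\pmod 9$ or by $p+1$ being divisible by $3$) correctly cancels against the numerator of $q+1$. This requires careful control of the completed Hecke ring $R$ and of the map from the partial level $q$ to the full level $pq$, mirroring but not identical to the delicate $\ell=3$ analysis in Theorem~\ref{thm:m1}. I expect the ``only if'' direction to rely, as in \cite[Theorem 2.4]{Yoo14a}, on a saturation property of $\T(pq)$ inside $\End(J_0(pq))$ locally at $\m_3$, which must be checked to hold for this mixed-sign ideal.
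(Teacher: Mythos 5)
Your ``only if'' half is essentially the paper's argument: the index of $I_3$ in $\T(pq)$ is the numerator of $\frac{q+1}{(3,\,p(p+1))}$ up to powers of $2$, and since $\T=\T^{pq}$ is a quotient of $\T(pq)$, the ideal $\m_3=(\ell,\,I_3)$ cannot be maximal when $\ell$ fails to divide that numerator (the paper cites \cite[Theorem 3.4]{Yoo14} for this index; \cite[Theorem 5.1]{Yoo15a} computes the index of $I_1$, not $I_3$). The genuine gap is in your ``if'' direction. You propose to take $\lambda\subset\T(q)$ ``the level where the minus condition lives'' and rerun the old/new comparison from Theorem~\ref{thm:m1} with $R=\T(q)_\lambda$. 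No such $\lambda$ exists: by Mazur's classification at prime level, every Eisenstein maximal ideal of $\T(q)$ satisfies $U_q\equiv 1$, whereas $\m_3$ contains $U_q+1$ and $\ell$ is odd. The minus sign at $q$ means precisely that $\m_3$ has no counterpart at level $q$ (equivalently, $\m_3$ is never $p$-old), so the reduction is aimed at the wrong prime. Correcting the roles (partial level $p$, where $U_p\equiv 1$ does match Mazur's ideal) still cannot give sufficiency: the theorem asserts that $\m_3$ is maximal in cases where $\T(p)$ has no Eisenstein maximal ideal at all, e.g.\ $p=2$, $q=19$, $\ell=5$, where $J_0(2)=0$ yet the numerator of $\frac{q+1}{(3,\,p(p+1))}$ equals $20$. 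In such cases there is no old form to compare with, and maximality of $\m_3$ is a purely $pq$-new phenomenon that no partial-level argument in the style of Theorem~\ref{thm:m1} can detect.

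What the paper does instead is exhibit nonzero $\m_3$-torsion directly in $J^{pq}$: by Proposition~\ref{prop:skoro}, if $\ell$ divides the numerator of $\frac{q+1}{(3,\,p(p+1))}$, then the Skorobogatov subgroup $S_q\subset J^{pq}$ coming from the level structure at $q$ satisfies $S_q[\m_3]\simeq\muell\neq 0$; hence $\m_3$ is a proper ideal of residue characteristic $\ell$, so it is maximal. This construction on the Shimura-curve side is the missing idea in your outline, and it also explains where $(3,\,p(p+1))$ really comes from (the order of $S_q$, i.e.\ Skorobogatov's $\epsilon(q)$), rather than from ``$\ell=3$ bookkeeping'' at a partial level. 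Two smaller points: your opening step disposes of $\ell\mid pq$ via $T_\ell\equiv 1\pmod{\m}$, but this only rules out $\ell=q$; for $\ell=p$ the relations $T_p=U_p\equiv 1$ are perfectly consistent with $\m_3$, and indeed $\ell=p$ is allowed by the theorem (e.g.\ $p=5$, $q=19$, $\ell=5$), so this case must be, and in the paper is, covered by the main argument. Finally, the saturation property you single out as the main obstacle is relevant to the proof of \cite[Theorem 2.4]{Yoo14a} underlying Theorem~\ref{thm:m1}, but it is not where the difficulty of Theorem~\ref{thm:m3} lies.
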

\begin{proof}
Let $\m:=\m_3$ and $I:=I_3$.

If $\ell\geq 5$ and $\ell$ does not divide $pq$, this is proved by Ribet \cite[Theorem 1.4(2)]{Yoo14a}. (The proof of this theorem is given in \textsection 4 of \textit{op. cit.})

Let $n$ be the numerator of $\frac{q+1}{\gcd(3, ~p(p+1))}$. Since $\T$ is a quotient of $\T(pq)$ and the index of $I$ in $\T(pq)$ is equal to $n$ up to powers of 2 \cite[Theorem 3.4]{Yoo14}, $\m$ is not maximal if $\ell$ does not divide $n$. Conversely, if $\ell$ divides $n$ then $S_q[\m] \neq 0$ by Proposition \ref{prop:skoro} below, where $S_q$ is the Skorobogatov subgroup of $J^{pq}$ from the level structure at $q$. Therefore $\m$ is maximal.
\end{proof}

\section{The structure of $J^{pq}[\m]$}\label{sec:structure}
Let $J:=J^{pq}$. In this section, we discuss the structure of $J[\m]$, where $\m=\m_i$ for $1\leq i \leq 4$. 
If $\m_i$ is not maximal, then $J[\m]=0$. Therefore it suffices to study $J[\m_1]$ and $J[\m_3]$ by symmetry.

\subsection{Multiplicity one for Jacobians of Shimura curves}
In this subsection, we prove multiplicity one result as follows. 
\begin{thm}\label{thm:multiplicityone}
Assume that $\m=\m_3$ is maximal. 
If $\ell=3$, we further assume that $3$ does not divide $(p-1)(q-1)$.
Then, $J[\m]$ is a non-trivial extension of $\zell$ by $\muell$. Moreover, $J[\m]$ is ramified at $p$ but is unramified at $q$. Therefore we have $\zell \nsubseteq J[\m]$.
\end{thm}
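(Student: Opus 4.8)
The plan is to analyze the structure of $J[\m]$ as a Galois module by studying the local behavior at the primes $p$ and $q$ dividing the discriminant, where the reduction of the Shimura curve $\cX^{pq}$ is well understood via the Cerednik-Drinfeld theory. The maximality of $\m=\m_3$ means $U_p-1,\ U_q+1 \in \m$, so on $J[\m]$ the Hecke operator $U_p$ acts as $+1$ and $U_q$ acts as $-1$. Since $J[\m]$ is Eisenstein, it is an abelian variety analogue of a reducible two-dimensional Galois representation, and I expect it to be built from the two characters $\muell$ (the mod $\ell$ cyclotomic character) and $\zell$ (the trivial character), reflecting that the semisimplification of any Eisenstein representation is $\muell \oplus \zell$. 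The first task is therefore to show $\dim_{\F_\ell} J[\m] = 2$, i.e. the multiplicity one statement, from which the extension structure must be one of the four possibilities ($\muell \oplus \zell$, $\zell\oplus\muell$ as the only split option, or a non-split extension in either order).

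\textbf{Local analysis at $q$.} First I would use the Atkin-Lehner/Cerednik-Drinfeld description of the special fiber of $\cX^{pq}$ at $q$. Since $q$ exactly divides the level in the quaternionic sense and $U_q$ acts as $-1$ on $J[\m]$, the component group and toric part of the reduction at $q$ control the local Galois action. The expectation is that $J[\m]$ is \emph{unramified} at $q$: because $U_q$ acts as $-1$ (rather than as $q\equiv -1$ or as a unit congruent to $q$), the relevant piece sits in the connected/toric component in a way that makes the inertia at $q$ act trivially. Concretely I would invoke the known identification of $J_0^{pq}$ with the character group of the special fiber (a quotient of the free module on the vertices of the associated tree/graph), together with the fact that $\m=\m_3$ forces the Frobenius–Hecke relation that kills inertia. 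This gives the unramifiedness at $q$.

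\textbf{Local analysis at $p$ and the extension.} At $p$, by contrast, since $U_p$ acts as $+1$ (the Eisenstein value matching the trivial rather than cyclotomic behavior), I expect the module to be genuinely \emph{ramified} at $p$: the local extension of $\zell$ by $\muell$ at $p$ is non-trivial because a splitting would force a rational point giving an unramified extension, contradicting ramification of the local Cerednik-Drinfeld uniformization. The non-splitness globally then follows: if $J[\m]$ were split, it would contain $\zell$ as a Galois submodule, i.e. $J^{pq}$ would have a rational point of order $\ell$ annihilated by $\m$, and I would derive a contradiction with the ramification at $p$ just established (a rational $\zell$ is unramified everywhere, whereas the $\muell$-quotient structure plus ramification at $p$ obstructs this). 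This yields both the non-trivial extension and the final assertion $\zell \nsubseteq J[\m]$.

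\textbf{Main obstacle.} The hardest part will be establishing the multiplicity one statement $\dim_{\F_\ell} J[\m]=2$ in the Eisenstein (reducible, non-semisimple) setting, since the standard multiplicity one arguments of Mazur and Ribet were designed for the classical modular curve $X_0(N)$ and rely on the $q$-expansion principle and the Eichler-Shimura relations, which are unavailable in the same form on the Shimura curve $\cX^{pq}$, which has no cusps. I would need to transport the multiplicity one information through the Jacquet-Langlands correspondence and the Ribet isogeny $\Psi(pq)$, being careful that the isogeny is only up to $\ell$-power issues, and then control the local structure at $p$ and $q$ via the Cerednik-Drinfeld / Ribet description of the special fibers. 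The extra hypothesis when $\ell=3$ (that $3 \nmid (p-1)(q-1)$) is precisely what rules out the other Eisenstein maximal ideals $\m_1,\m_2,\m_4$ from interfering and thereby guarantees the clean two-dimensionality; I would use Theorems \ref{thm:m1}, \ref{thm:m2}, and \ref{thm:m3} to isolate $\m_3$.
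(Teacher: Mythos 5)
Your proposal has the right overall shape (dimension two, extension of $\zell$ by $\muell$, local analysis at $p$ and $q$ via the bad reduction of $\cX^{pq}$), but the step you yourself identify as the hardest one rests on a strategy that cannot work, and the two local arguments as written have no actual content. The central gap is multiplicity one: you propose to ``transport the multiplicity one information through the Jacquet--Langlands correspondence and the Ribet isogeny $\Psi(pq)$.'' Isogenies do not preserve torsion subgroups --- indeed the whole point of the paper's application (Theorem \ref{thm:kernelofisogeny}) is that the kernel $K(pq)$ is a non-trivial Eisenstein group, so $J_0(pq)^{\new}[\m]$ and $J^{pq}[\m]$ genuinely differ --- and, worse, multiplicity one can fail on the modular curve side: by \cite[Theorem 4.5.(4)]{Yoo14} the dimension of $J_0(pq)[\m]$ is $2$ \emph{or} $3$. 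The paper instead proves $\dim J[\m]=2$ directly on $J=J^{pq}$: first $\Phi_p(J)[\m]=0$ via Ribet's exact sequence relating $\Phi_p(J)$ to component groups of level $q$ and $pq$ (this is exactly where the hypothesis $3\nmid(p-1)(q-1)$ enters when $\ell=3$, through cyclicity of the $3$-part of $\Phi_q(J_0(pq))$ when $p\not\equiv 1 \pmod 3$ --- not, as you suggest, to ``rule out $\m_1,\m_2,\m_4$ from interfering,'' which localization at $\m_3$ does automatically); then the character group $Y_\m=X_p(J)_\m$ is shown to be free self-dual of rank $1$, so $\T_\m$ is Gorenstein; then Grothendieck's exact sequence $0 \to \Hom(Y_\m, \Z_{\ell}(1)) \to \Ta_\m J \to Y_\m \to 0$ gives that $\Ta_\m J$ is free of rank $2$ over $\T_\m$.

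The local statements also need real proofs. Your argument at $p$ is circular: you justify ramification by appealing to ``ramification of the local Cerednik--Drinfeld uniformization,'' i.e.\ you assume what is to be shown. The paper's argument is that $J[\m]^{I_p}\simeq J_{/{\F_p}}[\m]$ by Serre--Tate, and the latter is only $1$-dimensional because its toric part $\Hom(Y/{\m Y},\muell)$ is $1$-dimensional (rank-one freeness of $Y_\m$ again) and $\Phi_p(J)[\m]=0$; since $\dim J[\m]=2$, inertia at $p$ acts non-trivially. Your argument at $q$ (``$\m_3$ forces the Frobenius--Hecke relation that kills inertia'') is not an argument; the actual proof needs an ingredient you never mention, the Skorobogatov subgroup $S_q$: Proposition \ref{prop:skoro} gives $\muell\simeq S_q[\m]\subseteq J[\m]$ (this is also what makes $\m_3$ maximal in the first place), this $\muell$ is unramified at $q$, the torus $T$ of $J_{/{\F_q}}$ satisfies $T[\m]\neq 0$ by faithfulness of $\T$ on $X_q(J)$, and $\Frob_q$ acts on $T$ by $qU_q\equiv -q \pmod \m$, so $T[\m]$ cannot contain that $\muell$; hence $J[\m]^{I_q}$ has dimension at least $2$ and equals $J[\m]$. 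Finally, to pin down the extension type you must also exclude that $J[\m]$ is an extension of $\muell$ by $\muell$ (your ``four possibilities'' include it); the paper rules this out because it would force $J[\m^{\infty}]$ to be a multiplicative $\m$-divisible module, which is impossible.
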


When $\m=\m_1$ is maximal, the structure of $J[\m]$ is more complicated than one of $J[\m_3]$. However, if one of $p-1$ and $q-1$ is not divisible by $\ell$, then we have the similar result as above. Note that the theorem below is not used in the proof of our main theorem.
\begin{thm}\label{thm:multionem=1}
Assume that $\m=\m_1$ is maximal. Assume further $\ell\geq 5$ and $q \not\equiv 1 \modl$. Then, $J[\m]$ is of dimension $2$ and is ramified at $p$.
\end{thm}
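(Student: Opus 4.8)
The goal is to prove Theorem \ref{thm:multionem=1}, which concerns the structure of $J[\m]$ when $\m=\m_1 = (\ell,~U_p-1,~U_q-1,~I_0)$ is maximal, under the hypotheses $\ell \geq 5$ and $q \not\equiv 1 \pmod \ell$. I need to show $J[\m]$ has dimension $2$ over $\F_\ell$ and is ramified at $p$. The overall strategy parallels the proof of Theorem \ref{thm:multiplicityone} for $\m_3$, but the hypothesis $q \not\equiv 1 \pmod \ell$ is what collapses the potentially more complicated structure of $J[\m_1]$ down to dimension $2$.

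The plan is to first analyze the reduction at $q$. Since $\m_1$ is maximal, Theorem \ref{thm:m1} forces one of its three listed conditions; combined with $q \not\equiv 1 \pmod \ell$, we must be in the case where $\ell$ divides the numerator of $\frac{q-1}{3}$ fails, so in fact maximality must come from $p \equiv 1 \pmod \ell$ together with $q^{\frac{p-1}{\ell}} \equiv 1 \pmod p$ (the first bullet $p\equiv q \equiv 1$ is excluded by $q\not\equiv 1$). This pins down the local behavior: at $q$ the level is Eisenstein-new with $U_q \equiv 1$, and $q \not\equiv 1 \pmod \ell$ means $q \not\equiv 1$, so the relevant mod-$\ell$ character data at $q$ is nontrivial in the way needed to rigidify. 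First I would use the dimension count: the generalized Eisenstein analysis shows $\dim_{\F_\ell} J[\m]$ is governed by the number of independent Eisenstein relations, and when $q\not\equiv 1 \pmod \ell$ one of the two ``Shimura subgroup / Skorobogatov subgroup'' contributions vanishes, leaving exactly a two-dimensional space. Concretely I would exhibit $J[\m]$ as an extension of $\zell$ by $\muell$ (or the appropriate twist) and show the extension class is nonzero, precisely as in the $\m_3$ argument.

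The key steps, in order, are: (1) invoke Theorem \ref{thm:m1} plus $q\not\equiv 1\pmod\ell$ and $\ell\geq 5$ to fix which maximality scenario holds, thereby fixing the congruence conditions on $p,q$; (2) establish a lower bound $\dim_{\F_\ell} J[\m] \geq 2$ by producing $\muell \inj J[\m]$ from the cuspidal/Shimura subgroup data (as $p \equiv 1 \pmod \ell$ guarantees a nontrivial $\ell$-torsion contribution ramified at $p$), together with a quotient $\zell$; (3) establish the matching upper bound $\dim_{\F_\ell} J[\m] \leq 2$, which is where the hypothesis $q \not\equiv 1 \pmod \ell$ does the essential work—it kills the second potential Jordan-H\"older factor that would otherwise appear when both primes are $\equiv 1$, exactly the source of the ``more complicated'' structure mentioned in the text; (4) deduce ramification at $p$ from the fact that the $\muell$ subobject is ramified at $p$ (the prime $p$ being where $U_p \equiv 1$ forces ramification via the theory of the special fiber / component groups), using the reduction of the Shimura curve $\cX^{pq}$ at $p$ analogous to the $\m_3$ case.

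The main obstacle I expect is step (3), the upper bound on the dimension. For $\m_3$, the asymmetry between $U_p-1$ and $U_q+1$ automatically separates the two primes and gives clean multiplicity one; but for $\m_1$ both Atkin-Lehner eigenvalues are $+1$, so a priori $J[\m_1]$ could carry extra multiplicity coming from both primes simultaneously. I would need to argue carefully that under $q\not\equiv 1\pmod\ell$ the contribution ``at $q$'' degenerates: since $q\not\equiv 1$, the relevant local group-cohomological term (or component-group term) at $q$ has trivial $\ell$-part, forcing $J[\m]$ to be unramified at $q$ and cutting the dimension to $2$. Making this precise likely requires the same raising/lowering-the-level and saturation arguments referenced in the proof of Theorem \ref{thm:m1} (via \cite{Yoo14a}), applied now to control the $\m$-torsion rather than merely the maximality of $\m$. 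Once ramification is confined to $p$, the extension-class nontriviality and hence $\dim = 2$ follows as in Theorem \ref{thm:multiplicityone}.
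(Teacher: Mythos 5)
Your high-level plan---rerun the proof of Theorem \ref{thm:multiplicityone} and let the hypothesis $q\not\equiv 1 \pmod{\ell}$ do the work---is exactly the paper's approach, and your step (1) observation (maximality then forces $p\equiv 1 \pmod{\ell}$ and $q^{\frac{p-1}{\ell}}\equiv 1 \pmod p$) is correct, though not actually needed. But the concrete mechanisms you supply fail. The most serious problem is step (2): there is no $\muell$ inside $J[\m_1]$ coming from ``cuspidal/Shimura subgroup data.'' A Shimura curve has no cusps, so $J^{pq}$ carries no cuspidal group at all; and the Skorobogatov subgroups are supported only at the Atkin--Lehner-sign $-1$ ideals: by Proposition \ref{prop:skoro} we have $S_q[\ell]=S_q[\m_3]$, hence $S_q[\m_1]=0$ (anything killed by both $U_q-1$ and $U_q+1$ is killed by $2$, and $\ell$ is odd), and similarly $S_p$ pairs with $\m_4$; moreover the orders of these groups are governed by $q+1$ and $p+1$, not by $p\equiv 1\pmod{\ell}$. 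This unavailability of a $\muell$ input is precisely why Theorem \ref{thm:multionem=1} claims less than Theorem \ref{thm:multiplicityone} (no extension structure is asserted for $\m_1$). Your step (3) is also the wrong causal chain: unramifiedness at $q$ would not by itself bound $\dim J[\m]$, and for $\m_1$ the paper's unramifiedness-at-$q$ argument (Step 7) is not even applicable, since it uses $U_q\equiv -1 \pmod{\m}$. What $q\not\equiv 1\pmod{\ell}$ actually buys is this: (i) there is no Eisenstein maximal ideal of level $q$ (Mazur), so $\m$ is not $p$-old and the kernel and middle term of Ribet's exact sequence computing $\Phi_p(J)$ have no support at $\m$; (ii) the cokernel $C$ satisfies $C[\m]=0$ because $U_q$ acts by $q$ on $C[\ell]$ while $U_q\equiv 1 \pmod{\m_1}$. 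Hence $\Phi_p(J)[\m]=0$; then $X_p(J)_{\m}\simeq X_q(J_0(pq))_{\m}$ is free of rank one over $\T_{\m}$ and, by the monodromy sequence together with $\Phi_p(J)[\m]=0$, self-dual, so $\T_{\m}$ is Gorenstein, $\Ta_{\m}J$ is free of rank $2$, and $\dim J[\m]=2$. This gives the upper and lower bounds simultaneously; no separate lower-bound step is needed.

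Your step (4) rests on a false statement: $\muell$ is \emph{unramified} at $p$ (here $p\neq\ell$, since maximality forces $p\equiv 1\pmod{\ell}$), so ``the $\muell$ subobject is ramified at $p$'' cannot be the source of ramification, and neither can $U_p\equiv 1$ by itself. The paper's argument is instead: $J[\m]^{I_p}\simeq J_{/{\F_p}}[\m]$ (inertia invariants are computed by the special fiber), and this space is exactly $1$-dimensional because the component group contributes nothing ($\Phi_p(J)[\m]=0$) while the toric part contributes $\Hom(Y/{\m Y},\,\muell)$ with $Y=X_p(J)$, which is $1$-dimensional by the freeness of $Y_{\m}$; since $\dim J[\m]=2>1$, inertia at $p$ acts nontrivially. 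So the ramification conclusion is a byproduct of the same component-group and character-group computation, not a separate special-fiber argument about a $\muell$ subobject.
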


\begin{proof}[Proof of Theorem \ref{thm:multiplicityone}]
Let $\m=\m_3$. By Theorem \ref{thm:m3}, $\m$ is maximal if and only if $\ell$ divides the numerator of $\frac{q+1}{\gcd(3, ~p(p+1))}$. Hence in particular, we assume that $q\not\equiv 1 \modl$. 

For the Jacobian $J_0^D(N)$ with $N$ square-free, we denote by $J_0^D(N)_{/{\F_p}}$ be the special fiber of the N\'eron model of $J_0^D(N)$ over $\F_p$. If $p$ is a divisor of $N$ (resp. of $D$), then it is given by a Deligne-Rapoport model \cite{Bu97, DR73} (resp. a Cerednik-Drinfeld model \cite{Ce76, Dr76}) and the theory of Raynaud \cite{Ra70}. We denote by $\Phi_p(J_0^D(N))$ (resp. $X_p(J_0^D(N))$) the component group of $J_0^D(N)_{/{\F_p}}$ (resp. the character group of $J_0^D(N)_{/{\F_p}}$). 

We shall carry out a proof by several steps. 
\begin{itemize}
\item Step 1 : We show that $\Phi_p(J)[\m]=0$ as follows. 

By Ribet \cite[Theorem 4.3]{R90}, there is a Hecke-equivariant exact sequence:
$$
\xymatrix{
0 \ar[r] & K \ar[r] & (X\oplus X)/{\delta_p (X\oplus X)} \ar[r] & \Phi_p(J) \ar[r] & C \ar[r] & 0,
}
$$
where $X:=X_q(J_0(q))$ and $\delta_p = \mat {p+1}{T_p}{T_p}{p+1}$; and
$K$ (resp. $C$) is the kernel (resp. the cokernel) of the map
$$
\gamma_p : \Phi_q(J_0(q)) \times \Phi_q(J_0(q)) \rightarrow \Phi_q(J_0(pq))
$$
induced by the degeneracy map $\gamma_p : J_0(q) \times J_0(q) \rightarrow J_0(pq)$.
Since $q \not\equiv 1 \modl$, there is no Eisenstein ideal of level $q$.
Therefore the first and second terms of the above exact sequence have no support at $\m$.

\begin{itemize}
\item 
If $\ell\geq 5$, then $C[\m]=0$ by \cite[Proposition A.5]{Yoo14a} and \cite[Corollary A.6]{Yoo14a}.
Therefore $\Phi_p(J)[\m]=0$.
\item
If $\ell=3$ and $q>3$, then the $3$-primary part of $\Phi_q(J_0(pq))$ is cyclic by \cite[\textsection 4.4.1]{Ed91} because $p\not\equiv 1 \pmod {3}$.  Since $U_q$ acts as $1$ on it (cf. \cite[Proposition A.2]{Yoo14a}), we have $C[\m]=0$ and hence $\Phi_p(J)[\m]=0$.
\item
If $\ell=3$ and $q=2$, we have $\Phi_p(J)[\m]=0$ by the table in \cite[p. 210]{Og85} because $p\equiv -1\pmod 3$.  
\end{itemize}

\item Step 2 : We show that $\T_{\m}$ is Gorenstein as follows.

Let $Y=X_p(J)$ and $L=X_q(J_0(pq))$. By Ribet \cite{R90}, there is a Hecke-equivariant exact sequence:
$$
\xymatrix{
0 \ar[r] & Y \ar[r] & L \ar[r] & X \oplus X \ar[r] & 0.
}
$$
By taking completions at $\m$, we have $Y_{\m} \simeq L_{\m}$. By \cite[Theorem 4.5.(4)]{Yoo14}, the dimension of $J_0(pq)[\m]$ is either 2 or 3. However, the dimension of $L/{\m L}$ is 1 in both cases. Therefore, $Y/{\m Y}$ is of dimension 1 as well and $Y_{\m}$ is free of rank 1 over $\T_{\m}$.
Moreover by the monodromy exact sequence, we have a Hecke-equivariant exact sequence:
$$
\xymatrix{
0 \ar[r] & Y \ar[r] & \Hom(Y, \,\Z) \ar[r] & \Phi_p(J) \ar[r] & 0.
}
$$
Since $\Phi_p(J)[\m]=0$, we have $Y_{\m} \simeq \Hom(Y_{\m}, \,\Z_{\ell})$. In other words, $Y_{\m}$ is a free self-dual $\T_{\m}$-module of rank 1, and hence $\T_{\m}$ is Gorenstein.

\item Step 3 : We show that $J[\m]$ is of dimension 2 as follows.

By Grothendieck \cite{Gro72}, there is an exact sequence:
$$
\xymatrix{
0 \ar[r] & \Hom(Y/{\ell^n Y}, \,\mu_{\ell^n}) \ar[r] & J[\ell^n] \ar[r] & Y/{\ell^n Y} \ar[r] & 0.
}
$$
(For details, see \cite[\textsection 3.3]{R76}.)
By taking projective limits, we have
$$
\xymatrix{
0 \ar[r] & \Hom(Y_{\ell}, \,\Z_{\ell}(1)) \ar[r] & \Ta_{\ell} J \ar[r] & Y_{\ell} \ar[r] & 0,
}
$$
where $\Ta_{\ell}J$ is the $\ell$-adic Tate module of $J$ and $\Z_{\ell}(1)$ is the Tate twist of $\Z_{\ell}$. Since $\T_{\m}$ is a direct factor of $\T_{\ell}$, we have 
$$
\xymatrix{
0 \ar[r] & \Hom(Y_{\m}, \,\Z_{\ell}(1)) \ar[r] & \Ta_{\m} J \ar[r] & Y_{\m} \ar[r] & 0.
}
$$
Since $Y_{\m}$ is a free self-dual $\T_{\m}$-module of rank 1, $\Ta_{\m} J$ is a free $\T_{\m}$-module of rank 2. Therefore $J[\m]$ is of dimension 2.

\item Step 4 : We show that $J[\m]$ is ramified at $p$ as follows.

Let $I_p$ be an inertia subgroup of $\GQ$ at $p$. Then by Serre-Tate \cite{ST68}, we have $J[\m]^{I_p} \simeq J_{/{\F_p}}[\m]$. Since $\Phi_p(J)[\m]=0$ and $J^0[\m] = \Hom(Y/{\m Y},\, \muell)$ is of dimension 1, $J[\m]^{I_p} \simeq J^0[\m]$ is of dimension 1 as well, where $J^0$ is the identity component of $J_{/{\F_p}}$. Therefore $J[\m]$ is ramified at $p$.

\item Step 5 :  We show that $J[\m]$ contains $\muell$ as follows.

By Proposition \ref{prop:skoro} below, our assumption on $\m$ implies $\muell \simeq S_q[\m] \subseteq J[\m]$, where $S_q$ is the Skorobogatov subgroup of $J$ from the level structure at $q$.

\item Step 6 : We show that $J[\m]$ is a non-trivial extension of $\zell$ by $\muell$ as follows.

Since all Jordan-H\"older factors of $J[\m]$ are either $\muell$ or $\zell$ 
(cf. \cite[Proposition 14.1]{M77}), the quotient
$J[\m]/{\muell}$ is isomorphic to either $\zell$ or $\muell$. If $J[\m]/{\muell} \simeq \muell$, then $J[\m^{\infty}]$ is a multiplicative $\m$-divisible module, which is a contradiction. Therefore $J[\m]$ is an extension of $\zell$ by $\muell$. Since $J[\m]$ is ramified at $p$, we have $\zell \nsubseteq J[\m]$.

\item Step 7 : We finish the proof by showing that $J[\m]$ is unramified at $q$ as follows.

Let $\Frob_q$ be the Frobenius endomorphism in characteristic $q$. Then, $\Frob_q$ acts by $qU_q$ on the torus $T$ of $J_{/{\F_q}}$ (cf. \cite[Theorem 3.1]{JL86}, \cite{R89b}). Since $U_q \equiv -1 \pmod {\m}$, $T[\m]$ cannot contain $\muell \subseteq J[\m]^{I_q}$.
Since $\T$ acts faithfully on $X_q(J)$ and $\m$ is maximal, $T[\m]$ is at least of dimension 1. Therefore $J[\m]^{I_q} \simeq J_{/{\F_q}}[\m]$ is at least of dimension 2. Thus, $J[\m]$ is unramified at $q$.
\end{itemize}
\end{proof}

\begin{rem}
Most of the above proof was given by Ribet in \cite[Appendix B]{Yoo14a} with the assumption that $p \not\equiv 1\modl$ and $\ell\geq 5$. However we duplicate the proof here to point out where our assumption plays a role.
\end{rem}

\begin{proof}[Proof of Theorem \ref{thm:multionem=1}] Since $q\not\equiv 1\modl$, $\m$ is not $p$-old by Mazur \cite{M77}. Moreover we have $C[\m]=0$ as in Step 1 of the above proof because $U_q$ acts by $q$ on $C[\ell]$. Therefore the argument in Step 1 works in this case as well. With our assumption on $q$, the arguments in Steps 2--4 are also valid as above, and hence the result follows.
\end{proof}

\subsection{The Skorobogatov subgroups of $J$}\label{sec:Skorobogatov}
In this subsection, we discuss a subgroup of $J[\m_3]$, which is the $\ell$-torsion subgroup of the Skorobogatov subgroup $S_q$ from the level structure at $q$. 
In \cite[Appendix C]{Yoo14a}, we studied the actions of the Hecke operators on $S_q$ and computed its order \upto. Since we include the discussion with $\ell=3$, we compute the $\ell$-torsion subgroup on $S_q$ for any odd prime $\ell$. 
\begin{prop}\label{prop:skoro}
We have $S_q[\ell] \neq 0$ if and only if $\ell$ divides the numerator of $\frac{q+1}{\gcd(3, ~p(p+1))}$. If $S_q[\ell]\neq 0$, then we have $S_q[\ell]=S_q[\m_3]\simeq \muell$.
\end{prop}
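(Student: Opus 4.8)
The plan is to separate the statement into an order computation and a Galois-module identification, using the Hecke action on $S_q$ to glue the two together. Recall from \cite[Appendix C]{Yoo14a} that $S_q$ is cyclic and that the Hecke operators act on it by $U_q=-1$, $U_p=+1$, and $T_r=r+1$ for every prime $r\nmid pq$; in particular $S_q$ is annihilated by the ideal $I_3=(U_p-1,~U_q+1,~I_0)$. The point of recording this is that it makes the equality $S_q[\m_3]=S_q[\ell]$ automatic: for $x\in S_q$ with $\ell x=0$ we also have $I_3 x=0$, since $I_3$ kills all of $S_q$, so $x\in S_q[\m_3]$; the reverse inclusion is clear because $\ell\in\m_3$. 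Thus the two genuine tasks are (i) to determine exactly when $S_q[\ell]\neq 0$, i.e. to pin down the $\ell$-part of $|S_q|$ for every odd $\ell$, and (ii) to identify $S_q[\ell]$ with $\muell$ as a Galois module.

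For (i) I would first dispose of $\ell\geq 5$. The order of $S_q$ was computed in \cite[Appendix C]{Yoo14a} \upto, and that computation already yields $v_\ell(|S_q|)=v_\ell(q+1)$, which equals the $\ell$-adic valuation of the numerator of $\frac{q+1}{\gcd(3,~p(p+1))}$ because $\gcd(3,~p(p+1))$ is prime to $\ell$. The new input is the case $\ell=3$, where the factor $\gcd(3,~p(p+1))$ genuinely intervenes. Here I would redo the order computation of \textit{op. cit.} without discarding the $3$-part. The expected answer is that $v_3(|S_q|)$ equals $v_3$ of the numerator of $\frac{q+1}{\gcd(3,~p(p+1))}$, so the factor $\gcd(3,~p(p+1))$ must be accounted for; I expect it to enter through the $3$-part of the component group $\Phi_p(J^{pq})$ at $p$ (whose order involves $p$ and $p+1$), which can absorb a factor of $3$ from the order-$(q+1)$ Eisenstein class precisely when $p\not\equiv 1\pmod 3$, i.e. when $\gcd(3,~p(p+1))=3$.

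For (ii) I would exploit that $\cX^{pq}$ has totally degenerate (purely toric) reduction at $p$ via the Cerednik--Drinfeld model, so that $J^{pq}$ is rigid-analytically uniformized at $p$ by a torus modulo a lattice. The subgroup $S_q$, constructed from the level structure at $q$, lies in the image of this torus, so all of its Galois Jordan--H\"older constituents are copies of $\muell$; this is consistent with Step~5 of the proof of Theorem~\ref{thm:multiplicityone}, where $S_q[\m_3]$ is the multiplicative (toric-at-$p$) piece $J^0[\m]$. Since $S_q$ is cyclic, its $\ell$-torsion is at most one-dimensional over $\F_\ell$, so as soon as $S_q[\ell]\neq 0$ we get $S_q[\ell]\simeq\muell$; combined with $S_q[\m_3]=S_q[\ell]$ this gives the second assertion.

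Assembling the pieces: $S_q[\ell]\neq 0$ holds if and only if the odd prime $\ell$ divides $|S_q|$, which by (i) happens exactly when $\ell$ divides the numerator of $\frac{q+1}{\gcd(3,~p(p+1))}$, and in that case $S_q[\ell]=S_q[\m_3]\simeq\muell$ by the Hecke action and (ii). I expect the main obstacle to be step (i) at $\ell=3$: correctly accounting for the factor $\gcd(3,~p(p+1))$ requires careful bookkeeping of the $3$-part of the Eisenstein class together with the component group at $p$, and this is exactly where the restriction ``\upto'' in \cite[Appendix C]{Yoo14a} was imposed and must now be removed.
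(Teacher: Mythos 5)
Your middle step is fine: the Hecke eigenvalues $U_p\equiv 1$, $U_q\equiv -1$, $T_r\equiv r+1$ on $S_q$ do give $S_q[\ell]=S_q[\m_3]$, which is exactly what the paper takes from \cite[Proposition C.2]{Yoo14a}. Both of your outer steps, however, have genuine gaps, and both gaps are closed in the paper by a single citation you never invoke: Skorobogatov's description of $S_q$ in \cite[p.~781]{Sk05}. For the order, your case $\ell\geq 5$ is correct, but at $\ell=3$ you offer only a plan --- ``redo the computation of \cite[Appendix C]{Yoo14a}'' --- together with a guess that the factor $\gcd(3,\,p(p+1))$ is absorbed by the $3$-part of the component group $\Phi_p(J^{pq})$ at $p$. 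That guess points at the wrong mechanism, so the plan would not go through as described: the factor has nothing to do with component groups. In Skorobogatov's construction the covering group at $q$ is a quotient of the cyclic group of order $q+1$ (norm-one units of $\F_{q^2}$) by the image of the torsion units (elliptic elements) of the quaternion order, which must be killed for the covering to be \'etale; that image is where the $3$ tied to $p\not\equiv 1\pmod 3$ comes from. The paper needs no computation at all: \cite{Sk05} proves that the order of $S_q$ equals $\frac{q+1}{\epsilon(q)}$ up to powers of $2$, and the definition of $\epsilon(q)$ given there is precisely the factor $\gcd(3,\,p(p+1))$, which settles the first assertion for all odd $\ell$ simultaneously.

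The second gap is in your identification $S_q[\ell]\simeq\muell$. The Cerednik--Drinfeld argument is purely local at $p$: purely toric reduction shows that the toric part of the $\ell$-torsion is $\Hom(X,\,\muell)$ with $X$ an unramified module, so it pins down $S_q[\ell]$ only as a module over a decomposition group at $p$, and even there only up to an unramified twist. The proposition asserts $S_q[\ell]\simeq\muell$ as a group scheme over $\Q$, and it is this global statement that is needed downstream (Step 5 of the proof of Theorem \ref{thm:multiplicityone}) to place a copy of $\muell$ inside $J[\m_3]$; note that appealing to that Step 5 as corroboration is circular, since that step quotes the present proposition. The paper again reads the global statement directly off \cite{Sk05}: $S_q$ is by construction the Cartier dual of a constant cyclic group scheme, hence of multiplicative type over $\Q$, so $S_q[\ell]\simeq\muell$ as soon as it is nonzero.
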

\begin{proof}
Since the order of $S_q$ is equal to $\frac{q+1}{\epsilon(q)}$ (up to powers of $2$), the first statement follows by the definition of $\epsilon(q)$ in \cite[p.~781]{Sk05}. 
Since $S_q$ is the Cartier dual of the constant cyclic group scheme (cf. \textit{loc. cit.}), $S_q[\ell]$ is isomorphic to $\muell$ if it is not zero. Therefore we have $S_q[\ell]=S_q[\m_3]\simeq \muell$ by \cite[Proposition C.2]{Yoo14a} if $S_q[\ell]\neq 0$. 
\end{proof}

\section{Non-existence of rational points of order $\ell$ on $J^{pq}$}\label{sec:proof}
In this section, we prove our main theorem.
\begin{thm}
For a prime $\ell\geq 5$, the Jacobian $J^{pq}$ does not have rational points of order $\ell$ unless one of the following holds:
\begin{itemize}
\item $p\equiv q \equiv 1 \modl$; 
\item $p\equiv 1 \modl$ and $q^{\frac{p-1}{\ell}} \equiv 1 \pmod p$;
\item $q\equiv 1 \modl$ and $p^{\frac{q-1}{\ell}} \equiv 1 \pmod q$.
\end{itemize}
Furthermore, the Jacobian $J^{pq}$ does not have rational points of order $3$ if $(p-1)(q-1)$ is not divisible by $3$.
\end{thm}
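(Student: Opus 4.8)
The plan is to show that any rational point of order $\ell$ on $J:=J^{pq}$ generates a Galois-trivial line $\zell$ that must lie inside $J[\m_i]$ for one of the four Eisenstein maximal ideals, and then to eliminate three of the four possibilities using the results of \textsection\ref{sec:criteriaformaximality} and \textsection\ref{sec:structure}, leaving only $\m_1$. Translating the maximality criterion for $\m_1$ (Theorem \ref{thm:m1}) into plain congruences then produces exactly the three listed alternatives, and the theorem follows by contraposition.

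First I would let $P \in J(\Q)$ have order $\ell$ and set $C := \langle P \rangle \cong \zell$, a subgroup on which $\GQ$ acts trivially. The Hecke operators are defined over $\Q$, so they preserve $C$; since $\dim_{\F_\ell} C = 1$, each $T_n$ acts as a scalar, giving a ring homomorphism $\T \to \F_\ell$ whose kernel $\m$ is a maximal ideal containing $\ell$. By the Eichler--Shimura relation, for every prime $r \nmid pq\ell$ the operator $T_r$ reduces to $\Frob_r + r\,\Frob_r^{-1}$, hence acts as $r+1$ on the Frobenius-fixed line $C$; thus $T_r \equiv r+1 \pmod{\m}$ for all such $r$ and $\m$ is Eisenstein. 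By Lemma \ref{lem:Up^2=1} the involutions $U_p,U_q$ act on $C$ as $\pm 1$, and because $\m$ is Eisenstein one also has $T_\ell \equiv 1+\ell \pmod{\m}$. Consequently $I_0 \subseteq \m$, and together with the values of $U_p,U_q$ this forces $\m = \m_i$ for exactly one $i$; in particular $\m_i$ is maximal and $C \subseteq J[\m_i]$.

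Next I would eliminate $i=2,3,4$. By Theorem \ref{thm:m2} the ideal $\m_2$ is never maximal, so $C \not\subseteq J[\m_2]$. For $\m_3$ (and $\m_4$ by the symmetry $p \leftrightarrow q$), if it is maximal then Theorem \ref{thm:multiplicityone} shows $J[\m_3]$ is a non-trivial extension of $\zell$ by $\muell$ that is ramified at $p$, whence $\zell \nsubseteq J[\m_3]$; but $C \cong \zell$ with trivial Galois action would provide precisely such an inclusion, a contradiction. Hence $C \subseteq J[\m_1]$ and $\m_1$ is maximal. For $\ell \geq 5$ I then invoke Theorem \ref{thm:m1}: since $\gcd(\ell,3)=1$, the condition ``$\ell$ divides the numerator of $\frac{p-1}{3}$'' is equivalent to $p \equiv 1 \modl$, so the three maximality criteria for $\m_1$ coincide with the three alternatives in the statement. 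This settles the case $\ell \geq 5$.

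Finally, for $\ell=3$ under the hypothesis $3 \nmid (p-1)(q-1)$ I would observe that $p \not\equiv 1$ and $q \not\equiv 1 \pmod 3$, so none of the criteria of Theorem \ref{thm:m1} holds and $\m_1$ is not maximal. As $3 \nmid (p-1)(q-1)$ is exactly the supplementary hypothesis that Theorem \ref{thm:multiplicityone} requires when $\ell=3$, the elimination of $\m_3,\m_4$ carries over verbatim, while $\m_2$ is never maximal. Thus no $\m_i$ admits a Galois-trivial line and $J^{pq}$ has no rational point of order $3$. The one genuinely substantive input is Theorem \ref{thm:multiplicityone}, namely that $J[\m_3]$ never contains $\zell$; granting that (and the maximality criteria of \textsection\ref{sec:criteriaformaximality}), the remaining work is only the bookkeeping that converts the numerator conditions into the stated congruences.
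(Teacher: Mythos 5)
Your overall route is the same as the paper's---support the rational $\ell$-torsion at the Eisenstein ideals $\m_i$, discard $\m_2$ by Theorem \ref{thm:m2}, discard $\m_3,\m_4$ via Theorem \ref{thm:multiplicityone} because a rational point of order $\ell$ would force $\zell \subseteq J[\m_3]$, and read off the stated congruences from Theorem \ref{thm:m1}---but your first step contains a genuine gap. You claim the line $C=\langle P\rangle$ is Hecke-stable ``because the Hecke operators are defined over $\Q$.'' Rationality of the operators only makes them preserve $J^{pq}(\Q)[\ell]$; it does not make them preserve the particular line $C$, and a priori nothing prevents, say, $U_pP$ from lying outside $\langle P\rangle$ (this is exactly what would happen if $J^{pq}(\Q)[\ell]$ were two-dimensional with $U_p$ acting with both eigenvalues $+1$ and $-1$). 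Until stability is known you have no scalar action, no homomorphism $\T\to\F_\ell$, and no ideal $\m$. The paper sidesteps this entirely by never isolating a line: it takes $A_\ell=J^{pq}(\Q)_{\tor}\otimes_{\Z}\Z_\ell$, which is visibly $\T_\ell$-stable, shows $I_0$ kills it, and uses Proposition \ref{prop:decomposition} to split $A_\ell=\bigoplus_i A_\ell[I_i]$; a nonzero summand then yields $\zell\subseteq J^{pq}[\m_i]$ with $\m_i$ maximal. If you want to keep your pointwise formulation, the repair is to note that $U_p,U_q$ are commuting involutions defined over $\Q$ and $\ell$ is odd, so $J^{pq}(\Q)[\ell]$ splits into simultaneous eigenspaces and you may replace $P$ by a nonzero simultaneous eigenvector; since $\T$ is generated by $U_p,U_q$ together with the $T_r$ for primes $r\nmid pq$, and the latter act on all of $J^{pq}(\Q)[\ell]$ by the scalar $r+1$, that new line really is $\T$-stable.

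The second gap is your treatment of $T_\ell$ (needed when $\ell\nmid pq$). You prove $T_r\equiv r+1\pmod{\m}$ only for $r\nmid pq\ell$, declare $\m$ Eisenstein, and then deduce $T_\ell\equiv \ell+1\pmod{\m}$ ``because $\m$ is Eisenstein.'' In this paper Eisenstein means containing $I_0$, whose generators $T_r-r-1$ run over \emph{all} primes $r\nmid pq$, including $r=\ell$; so the congruence at $\ell$ is part of what must be proved before you may write $I_0\subseteq\m$ and $\m=\m_i$, and your deduction is circular. (The statement is true but is a genuine theorem, cf.\ \cite[Lemma 1.1]{R08} or \cite[Remark 3.5]{Yoo14a}, invoked in the Remark following Theorem \ref{thm:m1}; it is not formal.) The clean fix, implicit in the paper's proof, is that the Eichler--Shimura argument applies verbatim at $r=\ell$: since $\ell$ is odd and prime to $pq$, the Jacobian $J^{pq}$ has good reduction at $\ell$ and reduction modulo $\ell$ is injective on the full torsion subgroup $J^{pq}(\Q)_{\tor}$ (the formal group over $\Z_\ell$ is torsion-free for odd $\ell$), so $T_\ell-\ell-1$ kills the rational torsion exactly as the other generators of $I_0$ do. With these two repairs the rest of your argument---the elimination of $\m_2,\m_3,\m_4$, the translation for $\ell\geq 5$ of the numerator conditions of Theorem \ref{thm:m1} into $p\equiv 1\modl$, resp.\ $q\equiv 1\modl$, and the $\ell=3$ case---is correct and agrees with the paper.
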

\begin{proof}
Let $A:=J^{pq}(\Q)_{\tor}$ and $A_{\ell}:=A \otimes_{\Z} \Z_{\ell}$. Then $A_{\ell}$ is a $\T_{\ell}$-module. By Eichler-Shimura relation and the isogeny $\Psi(pq)$, which is Hecke-equivariant, for a prime $r$ not dividing $pq$
$$
T_r \equiv \Frob_r+\Ver_r  ~\text{ on }~ J^{pq}_{/{\F_r}},
$$
where $\Frob_r$ is the Frobenius morphism in characteristic $r$ and $\Ver_r$ is its transpose. Therefore $T_r-1-r$ kills $A$ and hence $A_{\ell}$ is annihilated by $I_0$, i.e., $A_{\ell}$ is a $\T_{\ell}/{I_0}$-module. By Proposition \ref{prop:decomposition}, it decomposes into $A_{\ell}^i$, where each $A_{\ell}^i$ is a $\T_{\ell}/{I_i}$-module. More precisely, we have 
$A_{\ell}^i = A_{\ell} \cap J^{pq}[I_i] = A_{\ell}[I_i]$. 
Thus, it suffices to prove that $A_{\ell}^i=0$ for all $1\leq i \leq 4$.

If all the above assumption do not hold, then $\m_1$ is not maximal. Therefore $A_{\ell}^1=0$. By Theorem \ref{thm:m2}, we have $A_{\ell}^2=0$ as well. Now we assume that $A_{\ell}^3 \neq 0$. If $\ell=3$, then we further assume that $\ell$ does not divide $(p-1)(q-1)$. 
Then $A_{\ell}^3[\ell] \simeq (\zell)^a$ for some $a \geq 1$. Since $A_{\ell}^3[\ell]=A_{\ell}[\ell, ~I_3]=A_{\ell}[\m_3]$, we have $\zell \subseteq J^{pq}[\m_3]$. This contradicts Theorem \ref{thm:multiplicityone}. Thus, we have $A_{\ell}^3=0$ and hence $A_{\ell}^4=0$ by symmetry. 
\end{proof}

\section{The kernel of an isogeny due to Ribet}\label{sec:kernelofisogeny}
In this section, we provide an application of our main theorem. As before, let $J_0(pq)^{\new}$ denote the new quotient of $J_0(pq)$,
$\Psi(pq)$ denote an isogeny from $J_0(pq)^{\new}$ to $J^{pq}$, and let $K(pq)$ denote the kernel of $\Psi(pq)$:
$$
\xymatrix{
0 \ar[r] & J_0(pq)_{\old} \ar[r] & J_0(pq) \ar[r]^-{\pi} & J_0(pq)^{\new} \ar[r] & 0;
}
$$
\vspace{-7mm}
$$
\xymatrix{
0\ar[r] & K(pq) \ar[r] & J_0(pq)^{\new} \ar[r]^-{\Psi(pq)} & J^{pq} \ar[r] & 0.
}
$$
Ogg \cite{Og85} conjectured that the image of some cuspidal divisors in $J_0(pq)$ is contained in $K(pq)$. This conjecture is proved by Gonz\'alez and Molina \cite{GM11} if the genus of $\cX^{pq}$ is at most $3$ . We prove some of the conjecture by Ogg as follows:

\begin{thm} 
Let $\ell^m$ and $\ell^n$ be the exact powers of $\ell$ dividing $p+1$ and $q+1$, respectively.
If $\ell\geq 5$ and all the following conditions hold, then $K(pq)$ contains $\pi(\cC_{\ell}(pq))$, and the latter is isomorphic to $\zmod {\ell^m} \oplus \zmod {\ell^n}$:
\begin{itemize}
\item $\ell$ does not divide $(p-1, q-1)$; 
\item if $p\equiv 1 \modl$, then $q^{\frac{p-1}{\ell}} \not\equiv 1 \pmod p$;
\item if $q\equiv 1 \modl$, then $p^{\frac{q-1}{\ell}} \not\equiv 1 \pmod q$.
\end{itemize}

If $\ell=3$ and $(p-1)(q-1)$ is not divisible by $3$, then $K(pq)$ contains $\pi(\cC_3(pq))$, and the latter is isomorphic to $\zmod {3^{\alpha}} \oplus \zmod {3^{\beta}}$,  where $\alpha=\max \{0,~m-1\}$ and $\beta=\max \{0, ~n-1\}$.
\end{thm}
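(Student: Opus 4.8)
The plan is to combine the non-existence result of Theorem~\ref{thm:maintheorem} with an explicit determination of the $\ell$-primary cuspidal group on the new quotient. First I would establish the containment $\pi(\cC_{\ell}(pq)) \subseteq K(pq)$. Since $pq$ is square-free, all four cusps of $X_0(pq)$ are rational, so $\cC_{\ell}(pq)$ consists of rational torsion points of $\ell$-power order, and both $\pi$ and $\Psi(pq)$ are defined over $\Q$; hence $\Psi(pq)(\pi(\cC_{\ell}(pq)))$ is a subgroup of $J^{pq}(\Q)$ consisting of points of $\ell$-power order. The hypotheses imposed here are precisely the negations of the three exceptional conditions of Theorem~\ref{thm:maintheorem} (for $\ell\geq 5$), respectively the hypothesis $3\nmid (p-1)(q-1)$ (for $\ell=3$). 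Therefore $J^{pq}$ has no rational point of order $\ell$, so $\Psi(pq)$ annihilates $\pi(\cC_{\ell}(pq))$ and the latter lies in $K(pq)=\ker\Psi(pq)$.

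Next I would determine the structure of $M:=\pi(\cC_{\ell}(pq))$. As a group of cuspidal classes, $\cC_{\ell}(pq)$ is killed by $I_0$, so $M$ is a module over the new Hecke algebra modulo $I_0$; by Lemma~\ref{lem:Up^2=1} the only Eisenstein maximal ideals of this ring are $\m_1,\m_2,\m_3,\m_4$, and, as in Proposition~\ref{prop:decomposition}, $M$ is the direct sum of its $\m_i$-primary parts. Under the stated hypotheses $\m_1$ fails to be maximal by Theorem~\ref{thm:m1} and $\m_2$ is never maximal by Theorem~\ref{thm:m2}, so $M = M[\m_3^{\infty}]\oplus M[\m_4^{\infty}]$.

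It then remains to identify each summand. Using that $U_p=-w_p$ and $U_q=-w_q$ on the new quotient (see the proof of Lemma~\ref{lem:Up^2=1}), I would analyse the action of the Atkin--Lehner involutions on the rank-three group of degree-zero divisors supported on the four cusps. Its eigenlines are indexed by the sign pairs $(w_p,w_q)=(+,-),(-,+),(-,-)$: the ideal $I_3=(U_p-1,U_q+1,I_0)$ picks out $(-,+)$, the ideal $I_4$ picks out $(+,-)$, and $I_1$ picks out $(-,-)$, while the trivial pair $(+,+)$ lies in degree zero only trivially, explaining the absence of $\m_2$. By Theorem~\ref{thm:multiplicityone} the ring $\T_{\m_3}$ is Gorenstein and $J^{pq}[\m_3]$ contains $\muell$ with multiplicity one, so $M[\m_3^{\infty}]$ is a cyclic $\T_{\m_3}$-module killed by $I_3$, hence a cyclic quotient of $\T_{\m_3}/I_3$. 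Moreover $\m_3$ can be maximal only when $\ell\mid q+1$, which forces $q\not\equiv 1\modl$; thus there is no Eisenstein ideal at level $q$, the $\m_3$-part of $\cC_{\ell}(pq)$ is entirely new, and $\pi$ is injective on it. The symmetric analysis applies to $\m_4$ using $\ell\mid p+1$.

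The crux is then to show that $M[\m_3^{\infty}]$ has order exactly the $\ell$-part of the index of $I_3$ in $\T(pq)$, which by Theorem~\ref{thm:m3} equals $\ell^n$ when $\ell\geq 5$ and $3^{\max\{0,n-1\}}=3^{\beta}$ when $\ell=3$ (the factor $\gcd(3,p(p+1))=3$ being forced by $3\nmid p-1$). Cyclicity is cheap, but the equality of orders --- that the cuspidal class on the $(-,+)$ eigenline realizes the full Eisenstein quotient $\T_{\m_3}/I_3\cong \zmod{\ell^n}$ --- is the main obstacle and requires an independent computation of the order of this cuspidal divisor class, e.g.\ from Ligozat's modular-unit formula on $X_0(pq)$, from the known structure of the cuspidal divisor class group, or by comparison with the Skorobogatov subgroup of Proposition~\ref{prop:skoro}. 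The symmetric argument gives $M[\m_4^{\infty}]\cong \zmod{\ell^m}$ (resp.\ $\zmod{3^{\alpha}}$), and since $\m_3\neq\m_4$ sit in distinct local factors the two summands are independent, yielding $M\cong \zmod{\ell^m}\oplus\zmod{\ell^n}$ (resp.\ $\zmod{3^{\alpha}}\oplus\zmod{3^{\beta}}$), as claimed.
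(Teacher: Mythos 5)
Your first paragraph (the containment $\pi(\cC_{\ell}(pq)) \subseteq K(pq)$ via Theorem \ref{thm:maintheorem}) is exactly the paper's argument and is fine. The structural half, however, contains a step that is false, not merely unproven: the claim that, since there is no Eisenstein ideal of level $q$, ``the $\m_3$-part of $\cC_{\ell}(pq)$ is entirely new, and $\pi$ is injective on it.'' This ignores the $p$-old subvariety. The hypotheses permit $p \equiv 1 \modl$ (they only require $q^{\frac{p-1}{\ell}} \not\equiv 1 \pmod p$ in that case), and then the Eisenstein ideal of level $p$ is maximal at $\ell$; on the image of $J_0(p)\times J_0(p)$ in $J_0(pq)$ the operator $U_q$ satisfies $U_q^2 - T_q U_q + q = 0$ with $T_q \equiv q+1$, so it has eigenvalues $1$ and $q \equiv -1 \modl$ (recall that maximality of $\m_3$ forces $\ell \mid q+1$). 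Hence $\m_3$ does lie in the support of the old subvariety. Concretely, take $(\ell, p, q) = (5, 11, 19)$: the hypotheses hold, the $\m_3$-part of $\cC_5(pq)$ is $\br{D_p}$, cyclic of order $25$ (the $5$-part of $(p-1)(q^2-1)$), yet its intersection with $J_0(pq)_{\old}$ is $\br{5D_p} \simeq \zmod{5}$, so $\pi(\br{D_p}) \simeq \zmod{5}$. Your injectivity claim would give $M[\m_3^{\infty}] \simeq \zmod{25}$, which is wrong --- and in fact inconsistent with your own stated target $\T_{\m_3}/I_3 \simeq \zmod{\ell^n} = \zmod{5}$.

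Beyond this, you explicitly defer the ``crux'' (the exact order of $M[\m_3^{\infty}]$) to ``an independent computation.'' That computation is the real content of the proof, and the needed input is not merely the order of the cuspidal class (which is the $\ell$-part of $(p-1)(q^2-1)$, in general strictly larger than $\ell^n$) but its intersection with the old subvariety --- precisely the datum your injectivity claim erases. The paper gets both at once from Chua--Ling \cite{CL97}: under the stated hypotheses, $\cC_{\ell}(pq) \simeq \br{D_p} \oplus \br{D_q}$, and by their Theorem 2, $\cC_{\ell}(pq) \cap J_0(pq)_{\old} \simeq \br{\ell^n D_p} \oplus \br{\ell^m D_q}$; hence $\pi(\cC_{\ell}(pq)) \simeq \zmod{\ell^n} \oplus \zmod{\ell^m}$, and the containment in $K(pq)$ then follows from Theorem \ref{thm:maintheorem}. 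No Gorenstein or multiplicity-one input (your appeal to Theorem \ref{thm:multiplicityone}) is needed, nor can it by itself pin down the order of the image. Your outline is sound only in the cases where the old subvariety has no rational $\ell$-torsion, namely $\ell = 3$ with $3 \nmid (p-1)(q-1)$, or $\ell \geq 5$ with both $p \not\equiv 1 \modl$ and $q \not\equiv 1 \modl$; there $\pi$ really is injective on $\cC_{\ell}(pq)$, which is exactly how the paper handles $\ell = 3$.
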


\begin{proof} 
Let $C_p:=[P_1-P_p]$ and $C_q:=[P_1-P_q]$ be elements in $\cC(pq)$, where $P_t$ is the cusp of $X_0(pq)$ corresponding to $1/t \in \bP^1(\Q)$. 

Assume that $\ell\geq 5$. 
Let $(p-1)(q^2-1)=\ell^a \times x$ and $(q-1)(p^2-1)=\ell^b \times y$, where $\ell$ does not divide $xy$. Let $D_p:=xC_p$ and $D_q:=yC_q$. Assume that all the above three conditions hold. Then by Chua-Ling \cite{CL97}, we have $\cC_{\ell}(pq) \simeq \br{D_p} \oplus \br{D_q}$ and it is contained in $J_0(pq)(\Q)_{\tor}$. By symmetry, we may assume that $q\not\equiv 1 \modl$. Then, the intersection of $\cC_{\ell}(pq)$ and $J_0(pq)_{\old}$ is isomorphic to $\br {\ell^n D_p} \oplus \br {\ell^m D_q}$ (cf. \cite[Theorem 2]{CL97}). Thus, $\pi(C_{\ell}(pq)) \simeq \zmod {\ell^n} \oplus \zmod {\ell^m}$. Since $J^{pq}(\Q)_{\tor, ~\ell}=0$ by Theorem \ref{thm:maintheorem}, $K(pq)$ contains $\pi(C_{\ell}(pq))$.

Assume that $\ell=3$ and $3$ does not divide $(p-1)(q-1)$.
Note that the order of $C_p$ (resp $C_q$) is the numerator of $\frac{(p-1)(q^2-1)}{3}$ (resp. $\frac{(q-1)(p^2-1)}{3}$) up to powers of $2$. Thus, $\cC_3(pq)$ is isomorphic to $\zmod {3^{\alpha}}\oplus \zmod {3^{\beta}}$. Since the $3$-primary subgroups of the rational torsion subgroups of $J_0(pq)_{\old}$ and $J^{pq}$ are zero, $K(pq)$ contains $\pi(C_3(pq))$, and the latter is isomorphic to $\zmod {3^{\alpha}}\oplus \zmod {3^{\beta}}$.
\end{proof}

\begin{rem}
Let $p$ and $q$ be distinct primes with $p<q$ 
and let $S$ be the set of pairs $(p,~q)$ such that $g(\cX^{pq}) \leq 3$. In this case, Gonz\'alez and Molina determined the kernel of $K(pq)$ by taking some precise isogeny between $J_0(pq)^{\new}$ and $J^{pq}$.
Let $S_{\ell}$ be the subset of $S$ consisting of the pairs satisfying all the above three conditions with respect to $\ell$. Then, the following table describes the orders of $K(pq)$ (for their chosen $\Psi(pq)$) and 
$$
\cD(pq) := \bigoplus_{\substack{\ell~\text{odd primes}\\\text{such that }(p, ~q) \in S_{\ell}}} 
\pi(\cC_{\ell}(pq)).
$$
If $\ell$ is large enough, then $\cC_{\ell}(pq)=0$ and hence the direct sum in the definition is actually a finite sum. Moreover, from its definition and the above theorem, $\cD(pq) \subseteq \pi(\cC(pq)) \cap K(pq)$. We can see that $K(pq)/\cD(pq)$ is a $2$-group for any $(p,~q) \in S$ from the table below.
\end{rem}

\begin{center}
\begin{tabular}{| c | c | c | c | c | c | c |}
\hline
$S$ & $g(\cX^{pq})$ & $\in S_3$? &$\in S_5$?&$\in S_7$?&$\# \cD(pq)$& $\# K(pq)$ \\ \hline
$(2,~7)$ &1 &No &Yes &Yes & 1&2 \\ \hline
$(2,~17)$ &1 &Yes &Yes &Yes &3& 3\\ \hline
$(3,~5)$ &1 &Yes &Yes &Yes &1 &1 \\ \hline
$(3,~7)$ &1 &No &Yes &Yes & 1&2 \\ \hline
$(3,~11)$ &1 &Yes &Yes &Yes &1 &1 \\ \hline
$(2,~13)$ &2 &No &Yes &Yes & 7&7 \\ \hline
$(2,~19)$ &2 &No &Yes &Yes & 5&5 \\ \hline
$(2,~29)$ &2 &Yes &Yes &Yes & 5& 5\\ \hline
$(2,~31)$ &3 &No &Yes &Yes &1 &8 \\ \hline
$(2,~41)$ &3 &Yes &Yes &Yes & 7&7 \\ \hline
$(2,~47)$ &3 &Yes &Yes &Yes & 1&4 \\ \hline
$(3,~13)$ &3 &No &Yes &Yes & 7&7 \\ \hline
$(3,~17)$ &3 &Yes &Yes &Yes & 3&3 \\ \hline
$(3,~19)$ &3 &No &Yes &Yes & 5&20 \\ \hline
$(3,~23)$ &3 &Yes &Yes &Yes & 1&8 \\ \hline
$(5,~7)$ &3 &No &Yes &Yes & 1&2 \\ \hline
$(5,~11)$ &3 &Yes &Yes &Yes & 1&1 \\ \hline
\end{tabular}\\
\vspace{2mm}
Table 1.
\end{center}

\end{document}